\newcommand*{\textlabel}[2]{%
  \edef\@currentlabel{#1}
  \phantomsection
  #1\label{#2}
}
\newcommand{\R}{\mathbb{R}}
\newcommand{\E}{\mathbb{E}}
\providecommand{\norm}[1]{\ensuremath{\lVert#1\rVert}}
\providecommand{\tnorm}[1]{\ensuremath{\lvert\lvert\lvert #1\rvert\rvert\rvert}} 
\newcommand{\dd}{\mathrm{d}}
\newtheorem{theorem}{Theorem}[section]
\newtheorem{lemma}{Lemma}[section]
\newtheorem{proposition}{Proposition}[section]
\newtheorem{remark}{Remark}[section]
\title{A fully discrete approximation of the one-dimensional stochastic heat equation}
\author{%
{\sc
Rikard Anton\thanks{Email: rikard.anton@umu.se},
David Cohen\thanks{Corresponding author. Email: david.cohen@umu.se}
} \\[2pt]
Department of Mathematics and Mathematical Statistics,\\ 
Ume{\aa} University, 90187~Ume{\aa}, Sweden\\[6pt]
{\sc and}\\[6pt]
{\sc Lluis Quer-Sardanyons}\thanks{Email: quer@mat.uab.cat}\\[2pt]
Department of Mathematics, \\
Universitat Aut\`onoma de Barcelona, 08193~Bellaterra, Catalonia
}
\begin{document}
       

\maketitle

\begin{abstract}
A fully discrete approximation of the one-dimensional stochastic heat equation driven by multiplicative space-time white noise is presented.
The standard finite difference approximation is used in space and a stochastic exponential method
is used for the temporal approximation. Observe that the proposed exponential scheme does not suffer from any kind 
of CFL-type step size restriction. When the drift term and the diffusion coefficient are assumed to be globally Lipschitz, 
this explicit time integrator allows for error bounds in $L^q(\Omega)$, 
for all $q\geq2$, improving some existing results in the literature. 
On top of this, we also prove almost sure convergence of the numerical scheme.
In the case of non-globally Lipschitz coefficients, we provide sufficient conditions under which 
the numerical solution converges in probability to the exact solution. 
Numerical experiments are presented to illustrate the theoretical results.\\[1.5ex]
\textbf{Mathematics Subject Classification (2010):} 60H15; 60H35.\\[1.5ex]
\textbf{Keywords:}{stochastic heat equation; multiplicative noise; finite difference scheme; stochastic exponential integrator; $L^q(\Omega)$-convergence.}
\end{abstract}


\section{Introduction}\label{sect:intro}

We study an explicit full numerical discretization of the
one-dimensional stochastic heat equation
\begin{align}
&\frac{\partial}{\partial t}u(t,x)  = \frac{\partial^2}{\partial x^2} u(t,x) +
f(t,x,u(t,x))+\sigma(t,x,u(t,x))\frac{\partial^2}{\partial t\partial x}W(t,x) \quad \mathrm{in}\: \
(0,\infty)\times (0,1), \nonumber \\
&u(t,0) = u(t,1)=0 \quad \mathrm{for}\:\ t\in(0,\infty),\nonumber \\
&u(0,x)=u_0 \quad \mathrm{for}\:\ x\in[0,1], \label{heateq}
\end{align}
where $W$ is a Brownian sheet on $[0,\infty)\times[0,1]$ defined on
some probability space $(\Omega,\mathcal{F},\mathbb{P})$ satisfying the usual conditions, and 
$u_0$ is a continuous function in $[0,1]$ such that $u_0(0)=u_0(1)=0$.
Assumptions on the
coefficients $f$ and $\sigma$ will be
specified below. As far as the spatial discretization is concerned, we use a standard finite difference scheme, as in \cite{gyongy1}. 
In order to discretize \eqref{heateq} with respect to the time variable, we consider an exponential method similar to the
time integrators used in \cite{MR3033008,cqs14,MR3484400} for
stochastic wave equations or in \cite{ac16s,Cohen2017} for
stochastic Schr\"odinger equations.

\smallskip

Our main aim is to improve the temporal rate of convergence that has been obtained by Gy\"ongy in the reference \cite{gyongy2}. 
Indeed, in \cite{gyongy2}, the explicit as well as the semi-implicit 
Euler-Maruyama scheme have been applied for the time discretization of
problem \eqref{heateq}. When the functions $f$ and $\sigma$ are
globally Lipschitz continuous in the third variable, a temporal convergence order of $\frac18-$ 
in the $L^q(\Omega)$-norm, for all $q\geq 2$,  is obtained for these numerical schemes 
(see Theorem~3.1 in \cite{gyongy2} for a precise statement). Our first objective is to see if an explicit
exponential method can provide a higher rate of convergence. 
In the present work, we answer this question positively and obtain the temporal rate $\frac14-$ (see the first part of Theorem~\ref{fullLip} below).
We note that, as in \cite{gyongy2}, the latter estimate for the $L^q(\Omega)$-error holds for any fixed $t\in (0,T]$ and uniformly in the 
spatial variable, where $T>0$ is some fixed time horizon. 
On the other hand, we should also remark that, in \cite{gyongy2}, a rate of convergence $\frac14$ could be obtained only in the case where
the initial condition $u_0$ belongs to $C^3([0,1])$. Finally, as in \cite{gyongy2},
we also prove that the exponential
scheme converges almost surely to the solution of \eqref{heateq}, uniformly with respect to time and space variables (cf. 
Theorem~\ref{th:as}). 

\smallskip

Our second objective consists in refining the above-mentioned temporal rate of convergence in order to 
end up with a convergence order which is exactly $\frac14$ and 
with an estimate which is uniform both with respect to time and space variables. 
To this end, we assume that the initial condition $u_0$ belongs 
to some fractional Sobolev space (see \eqref{eq:22} for the precise definition). Indeed, as it can be deduced from the second part of Theorem~\ref{fullLip}
and well-known Sobolev embedding results, in order to have the rate $\frac14$, the hypothesis on $u_0$ implies that it
is $\delta$-H\"older continuous for all $\delta\in (0,\frac12)$.
Eventually, as in \cite{gyongy2}, we remove the globally
Lipschitz assumption on the coefficients $f$ and $\sigma$ in equation \eqref{heateq}, 
and we prove convergence in probability for the proposed
explicit exponential integrator (see Theorem~\ref{th:proba} below).

\smallskip

We should point out that there are also other important advantages with using the exponential method proposed here. Namely, first,  it does 
not suffer a step size restriction (imposed by a CFL condition) as the explicit Euler-Maruyama
scheme from \cite{gyongy2}. Secondly, it is an explicit scheme and
therefore has implementation advantages over the implicit
Euler-Maruyama scheme studied in \cite{gyongy2}. 
These facts will be illustrated numerically. 

\smallskip

The numerical analysis of the stochastic heat equation
\eqref{heateq} is an active research area. Without being too
exhaustive, beside the above mentioned papers \cite{gyongy1} and
\cite{gyongy2}, we mention the following works regarding numerical
discretizations of stochastic parabolic partial differential
equations: \cite{gyongy1,Yan,MR2916876,MR3290962} (spatial approximations);
\cite{MR1352735,MR1341554,gn97,anz98,MR1683281,m2an0068,DavieGaines,MR1951901,
MR1953619,ps05,Walsh,mm05,ritter,MR2471778,j09,MR2646103,MR2830608,j11,klns11,MR3047942,wg12,MR3027891,MR3101829,MR3320928,MR3534472,drw016}
(temporal and full discretizations); \cite{TaWangNie,lpt17}
(stability). Observe that most of these references are concerned
with an interpretation of stochastic partial differential equations
in Hilbert spaces and thus error estimates are provided in the
$L^2([0,1])$ norm (or similar norms). The reader is referred to the
monographs \cite{MR2856611,MR3154916,MR3308418} for a more
comprehensive reference list.

\smallskip

In the present publication, we follow a similar approach as in \cite{cqs14} and \cite{gyongy2}. 
The main idea consists in establishing suitable \textit{mild} forms for the spatial approximation $u^M$ and 
for the fully discretization scheme $u^{M,N}$. 
The obtained mild equations, together with some auxiliary results and taking into account the hypotheses on 
the coefficients and initial data, will allow us to deal with the $L^q(\Omega)$-error  
\[
\Bigl(\E[|u^M(t,x)-u^{M,N}(t,x)|^q]\Bigr)^{\frac 1{q}},
\]
for all $q\geq 2$. The $L^q(\Omega)$-error comparing $u^M$ with the exact solution of \eqref{heateq} has already been studied in 
\cite{gyongy1}.

\smallskip

The paper is organized as follows. In Section \ref{section:Lipschitz}, we study the numerical approximation of the solution
to equation \eqref{heateq} in the case of globally Lipschitz continuous coefficients. More precisely, 
we first recall the spatial discretization $u^M$ of \eqref{heateq} and prove some properties of $u^M$ needed in the sequel. 
Next, we introduce the full discretization scheme and prove that it satisfies a suitable mild form, and provide three auxiliary results which will be invoked in 
the convergence results' proofs. At this point, we state and prove the main result on $L^q(\Omega)$-convergence along with some 
numerical experiments illustrating its conclusion. Section \ref{section:Lipschitz} concludes with the result on almost sure convergence,
where we also provide some numerical experiments. Finally, Section \ref{section:nonLipschitz} is devoted to deal with the convergence 
in probability of the numerical solution to the exact solution of \eqref{heateq}, 
in the case where the coefficients $f$ and $\sigma$ are non-globally Lipschitz continuous.

\smallskip

Observe that, throughout this article, $C$ will denote a generic constant that may vary from line to line. 


\section{Error analysis for globally Lipschitz continuous coefficients}\label{section:Lipschitz}

This section is divided into three subsections. We begin by stating the assumptions
we will make and by recalling the mild solution of \eqref{heateq}. The first subsection is dedicated
to recalling the finite difference approximation from \cite{gyongy1} and some (new) results about it.
In the second subsection, we numerically integrate the resulting semi-discrete system of stochastic differential equations in time
to obtain a full approximation of \eqref{heateq}. We also state and prove our main result about convergence
in the $2p$-th mean. Finally, in the third subsection, we prove almost sure convergence of
the full approximation to the exact solution. In addition, numerical experiments are provided to illustrate the theoretical
results of this section.

In this section, we shall make the following assumptions on the coefficients
of the stochastic heat equation \eqref{heateq}: for a given positive real number $T$, there exist a constant $C$ such that
\begin{align}\label{L}
|f(t,x,u)-f(t,y,v)|+|\sigma(t,x,u)-\sigma(t,y,v)|\leq C\bigl(|x-y|+|u-v|\bigr),\tag{L}
\end{align}
for all $t\in[0,T]$, $x,y\in[0,1]$, $u,v\in\R$, and
\begin{align}\label{LG}
|f(t,x,u)|+|\sigma(t,x,u)|\leq C(1+|u|),\tag{LG}
\end{align}
for all $t\in[0,T]$, $x\in[0,1]$, $u\in\R$.
Assume also that the initial condition $u_0$ defines a continuous function on $[0,1]$ with 
$u_0(0)=u_0(1)=0$.
The assumptions \eqref{L} and \eqref{LG} imply existence and uniqueness
of a solution $u$ of equation \eqref{heateq} on the time interval $[0, T]$, see e.g. Theorem 3.2 and Exercise 3.4 in
\cite{walsh1}.
Let us recall that,  for a stochastic basis $(\Omega,\mathcal{F},(\mathcal{F}_t)_{t\geq 0}, \mathbb{P})$,
a solution to equation \eqref{heateq} is an $\mathcal{F}_t$-adapted continuous process
$\{u(t,x), (t,x)\in [0,T]\times[0,1]\}$ satisfying that, for every $\Phi\in C^\infty(\mathbb{R}^2)$
such that $\Phi(t,0)=\Phi(t,1)=0$ for all $t\geq 0$, we have
\begin{align}
 \int_0^1 u(t,x)\Phi(t,x)\, \dd x = & \int_0^1 u_0(x)\Phi(t,x)\, \dd x \nonumber \\
 & \quad + \int_0^t\int_0^1 u(s,x) \left(\frac{\partial^2 \Phi}{\partial x^2}(s,x) +
 \frac{\partial \Phi}{\partial s}(s,x)\right)\,  \dd x\,  \dd s \nonumber \\
 & \quad + \int_0^t\int_0^1 f(s,x,u(s,x)) \Phi(s,x)\,  \dd x\,  \dd s \nonumber \\
 & \quad + \int_0^t\int_0^1 \sigma(s,x,u(s,x)) \Phi(s,x)\,  W(\dd s,\dd x), \quad \mathbb{P}\text{-a.s.},
 \label{eq:13}
\end{align}
for all $t\in [0,T]$. It is well-known that the above equation implies the following {\it{mild}} form
for \eqref{heateq}:
\begin{equation}
\label{exactsol}
\begin{aligned}
u(t,x)&= \int_0^1 G(t,x,y)u_0(y)\,\dd y+\int_0^t\int_0^1 G(t-s,x,y)f(s,y,u(s,y))\,\dd y\,\dd s\\
&\quad+\int_0^t\int_0^1 G(t-s,x,y)\sigma(s,y,u(s,y))\,W(\dd s,\dd y), \quad \mathbb{P}\text{-a.s.},
\end{aligned}
\end{equation}
where $G(t,x,y)$ is the Green function of the linear heat equation with homogeneous Dirichlet boundary
conditions:
\[
 G(t,x,y)=\sum_{j=1}^\infty e^{-j^2 \pi^2 t} \varphi_j(x)\varphi_j(y), \quad t>0,\, x,y\in[0,1],
\]
with $\varphi_j(x):=\sqrt{2} \sin(j\pi x)$, $j\geq 1$. Note that these functions form an
orthonormal basis of $L^2([0,1])$.

\subsection{Spatial discretization of the stochastic heat equation}\label{sect:spatial}
In this subsection we recall the finite difference discretization
and some results obtained in \cite{gyongy1}. In addition to this, we
show new regularity results for the approximated Green function
$G^M(t,x,y)$ defined below, and for the space discrete
approximation, which will be needed in the sequel.

Let $M\geq 1$ be an integer and define the grid points $x_m=\frac{m}{M}$ for $m=0,\ldots,M$,
and the mesh size $\Delta x=\frac{1}{M}$. We now use the standard finite difference scheme
for the spatial approximation of \eqref{heateq} from \cite{gyongy1}. Let the process $u^M(t,\cdot)$ be defined
as the solution of the system of stochastic differential equations (for $m=1,\ldots,M-1$)
\begin{equation}
\begin{aligned}
\text du^M(t,x_m)&=M^2\left(u^M(t,x_{m+1})-2u^M(t,x_m)+u^M(t,x_{m-1})\right)\,\dd t\\
&\quad+f(t,x_m,u^M(t,x_m))\,\dd t\\
&\quad+M\sigma(t,x_m,u^M(t,x_m))\,\dd (W(t,x_{m+1})-W(t,x_m))
\end{aligned}
\label{fdeq}
\end{equation}
with Dirichlet boundary conditions
$$u^M(t,0)=u^M(t,1)=0,$$
and initial value
$$u^M(0,x_m)=u_0(x_m),$$
for $m=1,\ldots,M-1$. For $x\in[x_m,x_{m+1})$ we define
\begin{equation}
u^M(t,x):=u^M(t,x_m)+(Mx-m)(u^M(t,x_{m+1})-u^M(t,x_m)).
\label{eq:-1}
\end{equation}
We use the notations $u_m^M(t):=u^M(t,x_m)$ and $W_m^M(t):=\sqrt{M}(W(t,x_{m+1})-W(t,x_m))$, for $m=1,\ldots,M-1$
and write the system \eqref{fdeq} as
\begin{align*}
\text du^M_m(t)&=M^2\sum_{i=1}^{M-1}D_{mi}u^M_i(t)\,\dd t+f(t,x_m,u^M_m(t))\,\dd t\\
&\quad+\sqrt{M}\sigma(t,x_m,u^M_m(t))\,\dd W_m^M(t),
\end{align*}
with initial value
$$u_m^M(0)=u_0(x_m),$$
for $m=1,\ldots,M-1,$ where $D=(D_{mi})_{m,i}$ is a square matrix of size $M-1$, with elements $D_{mm}=-2$, $D_{mi}=1$ for $|m-i|=1$, $D_{mi}=0$ for $|m-i|>1$.
Also $W^M(t):=(W^M_m(t))_{m=1}^{M-1}$ is an $M-1$ dimensional Wiener process. Observe that the matrix $M^2D $ has eigenvalues
$$\lambda_j^M:=-4\sin^2\left(\frac{j\pi}{2M}\right)M^2=-j^2\pi^2c_j^M,$$
where
$$\frac{4}{\pi^2}\leq c_j^M:=\frac{\sin^2\left(\frac{j\pi}{2M}\right)}{\left(\frac{j\pi}{2M}\right)^2}\leq 1,$$
for $j=1,2,\ldots,M-1$ and every $M\geq 1$.

Using the variation of constants formula, the exact solution to \eqref{fdeq} reads
\begin{equation}
\begin{aligned}
u^M(t,x_m)&=\frac{1}{M}\sum_{l=1}^{M-1}\sum_{j=1}^{M-1}\exp(\lambda_j^M t)\varphi_j(x_m)\varphi_j(x_l)u_0(x_l)\\
&\quad+\int_0^t\frac{1}{M}\sum_{l=1}^{M-1}\sum_{j=1}^{M-1}\exp(\lambda_j^M(t-s))\varphi_j(x_m)\varphi_j(x_l)f(s,x_l,u^M(s,x_l))\,\dd s\\
&\quad+\int_0^t\frac{1}{\sqrt{M}}\sum_{l=1}^{M-1}\sum_{j=1}^{M-1}\exp(\lambda_j^M(t-s))\varphi_j(x_m)\varphi_j(x_l)\sigma(s,x_l,u^M(s,x_l))\,\dd W_l^M(s),
\end{aligned}
\label{spacediscsol}
\end{equation}
where we recall that $\varphi_j(x):=\sqrt{2}\sin(jx\pi)$ for $j=1,\ldots,M-1$.

We next define the discrete kernel $G^M(t,x,y)$ by
\begin{equation}
G^M(t,x,y):=\sum_{j=1}^{M-1}\exp(\lambda_j^M t)\varphi_j^M(x)\varphi_j(\kappa_M(y)),
\label{eq:1}
\end{equation}
where $\kappa_M(y):=\frac{[My]}{M}$, $\varphi_j^M(x):=\varphi_j(x_l)$ for $x=x_l$ and
\[
\varphi_j^M(x):=\varphi_j(x_l)+(Mx-l)\left(\varphi_j(x_{l+1})-\varphi_j(x_l)\right),
\quad \text{if } x\in(x_l,x_{l+1}].
\]

With these definitions in hand, one sees that the semi-discrete solution $u^M$ satisfies the mild equation:
\begin{equation}
\begin{aligned}
u^M(t,x)&=\int_0^1 G^M(t,x,y)u_0(\kappa_M(y))\,\dd y\\
&\quad+\int_0^t\int_0^1 G^M(t-s,x,y)f(s,\kappa_M(y),u^M(s,\kappa_M(y)))\,\dd y\,\dd s\\
&\quad+\int_0^t\int_0^1 G^M(t-s,x,y)\sigma(s,\kappa_M(y),u^M(s,\kappa_M(y)))\,\dd W(s,y)
\end{aligned}
\label{spaceapp}
\end{equation}
$\mathbb{P}$-a.s., for all $t\geq 0$ and $x\in[0,1].$

Next, we proceed by collecting some useful results for the error analysis of the fully discrete numerical discretization
presented in the next subsection. The following two results are proved in \cite{gyongy1}.
Recall that $u^M$ is the space discrete approximation given by \eqref{spaceapp} and that $u$ is the exact solution given by equation \eqref{exactsol}.

\begin{proposition}[Proposition $3.5$ in \cite{gyongy1}]\label{xreg}
Assume that $u_0\in C([0,1])$ with $u_0(0)=u_0(1)=0$,
and that the functions $f$ and $\sigma$ satisfy the condition \eqref{LG}.
Then, for every $p\geq 1$, there exists a constant $C$ such that
$$
\sup_{M\geq1}\sup_{(t,x)\in[0,T]\times[0,1]} \E[|u^M(t,x)|^{2p}]\leq C.
$$
\end{proposition}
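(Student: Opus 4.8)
The plan is to bound the $2p$-th moment of $u^M(t,x)$ using the mild form \eqref{spaceapp} together with Gronwall's lemma. First I would split the right-hand side of \eqref{spaceapp} into the three terms: the deterministic initial-data term $I_1(t,x)=\int_0^1 G^M(t,x,y)u_0(\kappa_M(y))\,\dd y$, the drift term $I_2(t,x)$, and the stochastic convolution $I_3(t,x)$. For $I_1$, since $u_0$ is continuous on a compact interval it is bounded, so $|I_1(t,x)|\leq \|u_0\|_\infty \int_0^1 |G^M(t,x,y)|\,\dd y$, and I would invoke the uniform (in $M$, $t$, $x$) bound $\int_0^1 |G^M(t,x,y)|\,\dd y\leq C$ that is available from \cite{gyongy1} (the discrete analogue of $\int_0^1 G(t,x,y)\,\dd y\leq 1$); this makes $\E[|I_1|^{2p}]$ uniformly bounded.

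For $I_2$, I would apply Jensen's (or Hölder's) inequality with respect to the finite measure $G^M(t-s,x,y)\,\dd y\,\dd s$ on $[0,t]\times[0,1]$ (whose total mass is $\leq CT$), giving
\[
\E[|I_2(t,x)|^{2p}]\leq C\int_0^t\int_0^1 G^M(t-s,x,y)\,\E\bigl[|f(s,\kappa_M(y),u^M(s,\kappa_M(y)))|^{2p}\bigr]\,\dd y\,\dd s,
\]
and then use the linear growth bound \eqref{LG} to replace the integrand by $C(1+\E[|u^M(s,\kappa_M(y))|^{2p}])$. For $I_3$, I would use the Burkholder--Davis--Gundy inequality for stochastic integrals against the Brownian sheet, which controls $\E[|I_3(t,x)|^{2p}]$ by a constant times $\E\bigl[\bigl(\int_0^t\int_0^1 G^M(t-s,x,y)^2\sigma(s,\kappa_M(y),u^M(s,\kappa_M(y)))^2\,\dd y\,\dd s\bigr)^p\bigr]$; after another Jensen step with respect to the measure $G^M(t-s,x,y)^2\,\dd y\,\dd s$ and the crucial uniform bound $\sup_{M,t,x}\int_0^t\int_0^1 G^M(t-s,x,y)^2\,\dd y\,\dd s\leq C$ (again from \cite{gyongy1}), and then \eqref{LG}, this is bounded by $C\int_0^t\int_0^1 G^M(t-s,x,y)^2\bigl(1+\E[|u^M(s,\kappa_M(y))|^{2p}]\bigr)\,\dd y\,\dd s$.

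Setting $\phi(s):=\sup_{M\geq1}\sup_{x\in[0,1]}\E[|u^M(s,x)|^{2p}]$ (which is finite for each $s$ by the existence theory / standard a priori estimates, or can be made rigorous by first working with a truncated/stopped version and passing to the limit), the three estimates combine to yield
\[
\phi(t)\leq C+C\int_0^t \Bigl(\sup_{M,x}\!\int_0^1 \bigl(G^M(t-s,x,y)+G^M(t-s,x,y)^2\bigr)\phi(s)\,\dd y\Bigr)\dd s \leq C+C\int_0^t K(t-s)\phi(s)\,\dd s,
\]
where $K(r)=\sup_{M,x}\int_0^1\bigl(G^M(r,x,y)+G^M(r,x,y)^2\bigr)\dd y$ satisfies $\int_0^T K(r)\,\dd r<\infty$ (the $G^M$ part is $O(1)$ and the $(G^M)^2$ part behaves like $r^{-1/2}$ near $0$, which is integrable). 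A generalized Gronwall inequality with an integrable singular kernel then gives $\phi(t)\leq C$ uniformly on $[0,T]$, which is the claim.

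The main obstacle I anticipate is twofold: first, justifying a priori that $\phi(s)$ is finite (so that the Gronwall argument is not circular) — the clean fix is to run the whole argument with $u^M$ replaced by $u^{M}(\cdot\wedge\tau_n)$ for the stopping time $\tau_n=\inf\{t: |u^M(t,x)|\geq n \text{ for some }x\}$, obtain a bound independent of $n$, and let $n\to\infty$ using monotone convergence; second, assembling the precise uniform-in-$M$ kernel estimates $\int_0^1 G^M\,\dd y\leq C$ and $\int_0^t\int_0^1 (G^M)^2\,\dd y\,\dd s\leq C$ with the right singularity structure — these are exactly the technical Green-function bounds established in \cite{gyongy1}, so they may be cited, but one must make sure the exponent $2p$ in BDG is matched by enough integrability of the squared kernel, which is fine since the kernel bounds are deterministic and the extra power only enters through the outer $(\cdot)^p$ handled by Jensen.
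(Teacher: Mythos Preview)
The paper does not actually prove this proposition: it is stated as Proposition~3.5 of \cite{gyongy1} and simply cited, with the sentence ``The following two results are proved in \cite{gyongy1}.'' So there is no in-paper proof to compare against; your proposal is a reconstruction of the argument from the original reference.

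That said, your outline is the standard and correct one, and it matches in spirit what is done in \cite{gyongy1}: split the mild form \eqref{spaceapp} into initial-data, drift, and stochastic-convolution parts; use the uniform $L^1$ bound on $G^M$ for the first two (with H\"older/Jensen), BDG plus the uniform-in-$M$ estimate $\int_0^t\int_0^1 (G^M)^2\,\dd y\,\dd s\leq C$ for the third; then close with a Gronwall argument for $\phi(t)=\sup_{M,x}\E[|u^M(t,x)|^{2p}]$ using the integrable kernel $K(r)\lesssim 1+r^{-1/2}$. The two technical ingredients you flag---the uniform kernel bounds and the a priori finiteness of $\phi$ needed to avoid circularity---are exactly the points handled in \cite{gyongy1}, so citing them is appropriate. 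One minor remark: rather than introducing a stopping time to justify finiteness of $\phi$, it is slightly cleaner (and closer to how such arguments are usually written for these SPDEs) to first fix $M$ and use that $u^M$ solves a finite-dimensional SDE with Lipschitz coefficients, so all moments are finite for each $M$; the Gronwall step then shows the bound is uniform in $M$.
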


\begin{theorem}[Theorem $3.1$ in \cite{gyongy1}]\label{th:space}
Assume that $f$ and $\sigma$ satisfy the conditions
\eqref{L} and \eqref{LG}, and that $u_0\in C([0,1])$
with $u_0(0)=u_0(1)=0$. Then, for every $0<\alpha<\frac14$, $p\geq 1$ and for every $t>0$,
there is a constant $C=C(\alpha,p,t)$ such that
\begin{align}\label{xconv}
\sup_{x\in[0,1]}\Big(\E[|u^M(t,x)-u(t,x)|^{2p}]\Big)^{\frac{1}{2p}}\leq C(\Delta x)^{\alpha}.
\end{align}
We recall that $\Delta x=1/M$ is the mesh size in space.
Moreover, $u^M(t,x)$ converges to $u(t,x)$ almost surely as $M\rightarrow\infty$,
uniformly in $t\in[0,T]$ and $x\in [0,1]$, for every $T>0$.

If $u_0$ is sufficiently smooth (e.g. $u_0\in C^3([0,1])$) then for every $T>0$, estimate \eqref{xconv}
holds with $\alpha=\frac12$ and with the same constant $C$ for all $t\in[0,T]$ and integer $M\geq 1$.
\end{theorem}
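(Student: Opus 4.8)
Since \eqref{xconv} compares the space-discrete solution with the exact solution, I would start from the two mild formulas already available, \eqref{exactsol} for $u$ and \eqref{spaceapp} for $u^M$, and subtract them, writing $u(t,x)-u^M(t,x)=A^M(t,x)+B^M(t,x)+C^M(t,x)$. Here $A^M$ is the deterministic initial-value contribution $\int_0^1\bigl(G(t,x,y)u_0(y)-G^M(t,x,y)u_0(\kappa_M(y))\bigr)\,\dd y$, $B^M$ is the drift contribution built from $f$, and $C^M$ the stochastic contribution built from $\sigma$ integrated against $W(\dd s,\dd y)$. Both $B^M$ and $C^M$ are then split once more into a \emph{coefficient part}, in which $G^M$ is retained but $f,\sigma$ evaluated at $(s,y,u(s,y))$ are compared with their values at $(s,\kappa_M(y),u^M(s,\kappa_M(y)))$, and a \emph{kernel part}, in which the coefficients are frozen at the discrete arguments while $G(t-s,x,\cdot)$ is compared with $G^M(t-s,x,\cdot)$.

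The analytic core is a list of estimates on $G$ and on $G^M$ from \eqref{eq:1}, obtained from the eigenfunction expansions and the pinching $\tfrac{4}{\pi^2}\leq c_j^M\leq 1$: the $L^1$- and $L^2$-in-space bounds $\sup_x\int_0^1 G^M(t,x,y)\,\dd y\leq C$ and $\sup_x\int_0^1 G^M(t,x,y)^2\,\dd y\leq C t^{-1/2}$ (and likewise for $G$), a space-time kernel-difference bound of the form $\sup_x\int_0^t\!\int_0^1\bigl(G(s,x,y)-G^M(s,x,\kappa_M(y))\bigr)^2\,\dd y\,\dd s\leq C(\Delta x)^{2\alpha}$ for every $\alpha<\tfrac14$ (and its $L^1$-in-space analogue), and spatial-increment estimates for $G$, $G^M$ and for $u$, the last one reading $\bigl(\E[|u(s,y)-u(s,z)|^{2p}]\bigr)^{\frac{1}{2p}}\leq C|y-z|^{\frac12-\varepsilon}$. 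These are precisely the bounds established in \cite{gyongy1}; the interplay between their exponents and the H\"older-$(\tfrac12-\varepsilon)$ spatial regularity of $u$ is what produces the admissible range $\alpha<\tfrac14$, and in the general continuous-$u_0$ case it is $A^M$, controlled only through the modulus of continuity of $u_0$ together with the smoothing of the kernel at the fixed time $t$, that forces the restriction.

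Granting these estimates, the moment bounds follow in the usual way. For $C^M$ I would apply the Burkholder--Davis--Gundy inequality to reduce $\E[|C^M(t,x)|^{2p}]$ to the $p$-th moment of $\int_0^t\!\int_0^1(\cdots)^2\,\dd y\,\dd s$; inside the coefficient part, \eqref{L} peels off $|u(s,y)-u(s,\kappa_M(y))|$ (estimated by the spatial-increment bound for $u$) and $|u(s,\kappa_M(y))-u^M(s,\kappa_M(y))|$, while \eqref{LG} and Proposition~\ref{xreg} absorb the frozen coefficients and the $L^2$ kernel bound handles the rest; inside the kernel part one uses the kernel-difference bound above. The term $B^M$ is treated identically, with H\"older's inequality replacing BDG. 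Setting $\phi_M(s):=\sup_{x\in[0,1]}\E[|u(s,x)-u^M(s,x)|^{2p}]$, finite by Proposition~\ref{xreg} and the known moment bounds for $u$, one arrives at $\phi_M(t)\leq C(\Delta x)^{2p\alpha}+C\int_0^t(t-s)^{-1/2}\phi_M(s)\,\dd s$, whence a Gronwall lemma with weakly singular kernel yields $\phi_M(t)\leq C(\Delta x)^{2p\alpha}$, i.e.\ \eqref{xconv} after taking the $\tfrac{1}{2p}$-th power. The main obstacle, I expect, is the kernel analysis of the previous paragraph: pushing the exponent of $\Delta x$ in the $L^2$ kernel-difference bound as high as possible while simultaneously tracking the mesh operator $\kappa_M$, a careless split of the time integral near $s=0$ being fatal to the rate.

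Finally, for the two addenda. Almost sure uniform convergence follows by choosing $2p$ large, proving uniform-in-$M$ Kolmogorov-type moment bounds on the space-time increments of $u^M$ (and recalling the analogous bounds for $u$), so that the pointwise estimate upgrades to $\sup_{(t,x)\in[0,T]\times[0,1]}\E[|u(t,x)-u^M(t,x)|^{2p}]\leq C(\Delta x)^{2p\alpha}$, and then invoking Borel--Cantelli along $M=2^n$ together with a chaining argument. For the sharpening to $\alpha=\tfrac12$, uniformly on $[0,T]$, when $u_0\in C^3([0,1])$ with $u_0(0)=u_0(1)=0$: the extra smoothness and compatibility let one bound $A^M$ by $C\Delta x$ with a constant independent of $t\in[0,T]$ (integrating by parts against the kernels and using $u_0,u_0',u_0''$), while $B^M$ and $C^M$, estimated with the kernel bounds in their sharper form once the $t\to0$ obstruction is gone, already deliver the exponent $\tfrac12$; together this gives \eqref{xconv} with $\alpha=\tfrac12$ uniformly on $[0,T]$.
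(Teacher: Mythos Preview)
The paper does not give its own proof of this statement: it is quoted verbatim as Theorem~3.1 of \cite{gyongy1} and used as a black box, with no argument supplied. There is therefore nothing in the present paper to compare your proposal against.

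That said, your outline is a faithful reconstruction of the strategy in \cite{gyongy1}: subtract the mild formulas \eqref{exactsol} and \eqref{spaceapp}, split into initial-data, drift and stochastic parts, and within the latter two separate a kernel-difference piece from a coefficient-difference piece; control the stochastic pieces by Burkholder--Davis--Gundy and Minkowski, feed in the Lipschitz assumption \eqref{L}, the growth bound \eqref{LG}, the moment bound of Proposition~\ref{xreg}, and the spatial H\"older regularity of $u$; then close with a Gronwall inequality with weakly singular kernel. The almost-sure statement via uniform Kolmogorov-type increment bounds plus Borel--Cantelli, and the sharpening to $\alpha=\tfrac12$ under $u_0\in C^3$ by handling $A^M$ directly, are likewise the route taken in \cite{gyongy1}. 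One small caution: for the almost-sure part you propose Borel--Cantelli along the subsequence $M=2^n$; to get convergence for \emph{all} $M\to\infty$ you must combine this with the uniform-in-$M$ H\"older estimates on $u^M$ (essentially Proposition~\ref{holder} here, or its analogue in \cite{gyongy1}) to interpolate between dyadic values, which you allude to but do not spell out.
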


\medskip

We will also make use of the following estimates on the discrete Green function.

\smallskip

\begin{lemma}\label{greenreg}
There is a constant $C$ such that the following estimates hold:
\begin{enumerate}[label=(\roman*)]
\item For all $0<s<t\leq T$:
\begin{align}\label{greenreg1}
\sup_{M\geq 1} \sup_{x\in[0,1]}\int_0^s\int_0^1|G^M(t-r,x,y)-G^M(s-r,x,y)|^2\,\dd y\,\dd r\leq C(t-s)^{1/2}.
\end{align}
\item For all $t\in (0,T]$:
$$
\sup_{M\geq1}\sup_{x\in[0,1]}\int_0^1|G^M(t,x,y)|^2\,\dd y\leq C\frac{1}{\sqrt{t}}.
$$
\item For all $0<s<t\leq T$ and $\alpha\in (\frac12,\frac52)$:
$$
\sup_{M\geq1} \sup_{x\in[0,1]}\int_0^1 |G^M(t,x,y)-G^M(s,x,y)|^2\,\dd y\leq Cs^{-\alpha}(t-s)^{\alpha-\frac12}.
$$
\end{enumerate}
\end{lemma}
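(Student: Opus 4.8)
The plan is to reduce all three estimates to spectral computations using the representation \eqref{eq:1}, together with the standard fact that the interpolated eigenfunctions $\varphi_j^M$ and the piecewise-constant evaluation $\varphi_j(\kappa_M(y))$ are uniformly bounded in $L^2([0,1])$ and satisfy a near-orthogonality relation; concretely, by Parseval on the grid one has $\int_0^1 \varphi_i(\kappa_M(y))\varphi_j(\kappa_M(y))\,\dd y$ equal to $\delta_{ij}$ up to boundary corrections, and $\int_0^1 |\varphi_j^M(x)|^2\,\dd x \le C$ uniformly in $j,M$. Combining this with the eigenvalue bounds $\lambda_j^M=-j^2\pi^2 c_j^M$ and $\frac{4}{\pi^2}\le c_j^M\le 1$, integrating the product $G^M\cdot G^M$ in $y$ collapses the double sum to a single sum, so that for instance $\int_0^1 |G^M(t,x,y)|^2\,\dd y \le C\sum_{j=1}^{M-1}\e^{2\lambda_j^M t} \le C\sum_{j=1}^{\infty}\e^{-2j^2\pi^2 (4/\pi^2) t} = C\sum_{j\ge 1}\e^{-8j^2 t}$.

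For part (ii) I would then bound $\sum_{j\ge 1}\e^{-cj^2 t}$ by $C\int_0^\infty \e^{-cr^2 t}\,\dr = C t^{-1/2}$, which is exactly the claimed $C/\sqrt{t}$. For part (iii), after the same collapsing one is left with $\sum_{j=1}^{M-1}\e^{2\lambda_j^M s}\bigl(1-\e^{\lambda_j^M(t-s)}\bigr)^2$; here I would use the elementary inequality $(1-\e^{-a})^2 \le a^{\alpha-1/2}\cdot a^{5/2-\alpha}/\ldots$ — more precisely $1-\e^{-a}\le \min(1,a)\le a^\beta$ for any $\beta\in(0,1)$ and $a\ge 0$ — applied with an exponent chosen so that the surviving sum $\sum_j (j^2)^{\,\cdot}\e^{-cj^2 s}$ is comparable to $\int_0^\infty r^{2\gamma}\e^{-cr^2 s}\,\dr = C s^{-\gamma-1/2}$. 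Writing $(1-\e^{\lambda_j^M(t-s)})^2\le C (j^2(t-s))^{\alpha-1/2}$ (legitimate since $\alpha-\tfrac12\in(0,1)$ for $\alpha\in(\tfrac12,\tfrac32)$, and for $\alpha\in[\tfrac32,\tfrac52)$ one instead bounds the factor by $1$ and pays with a larger power of $j$ absorbed by the Gaussian) and summing against $\e^{2\lambda_j^M s}$ yields $C(t-s)^{\alpha-1/2}\sum_j j^{2\alpha-1}\e^{-cj^2 s}\le C(t-s)^{\alpha-1/2}s^{-\alpha}$, which is the desired bound.

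Part (i) is the one requiring the most care, since it involves a time integral of the squared difference over $r\in(0,s)$. Here I would first perform the $y$-integration to collapse to $\int_0^s \sum_{j=1}^{M-1}\e^{2\lambda_j^M(s-r)}\bigl(1-\e^{\lambda_j^M(t-s)}\bigr)^2\,\dr$ (using $G^M(t-r,\cdot)-G^M(s-r,\cdot)$ and that $t-r=(s-r)+(t-s)$), then integrate in $r$ explicitly: $\int_0^s \e^{2\lambda_j^M(s-r)}\,\dr = \frac{1-\e^{2\lambda_j^M s}}{2|\lambda_j^M|}\le \frac{C}{j^2}$. This leaves $C\sum_{j=1}^{M-1} j^{-2}\bigl(1-\e^{\lambda_j^M(t-s)}\bigr)^2$, and applying $1-\e^{-a}\le a^{1/4}$ (i.e. $\beta=\tfrac14$) gives $\le C(t-s)^{1/4}\sum_{j\ge1} j^{-2}\cdot j^{1/2} = C(t-s)^{1/4}\sum_{j\ge1} j^{-3/2}$, a convergent series; but the claimed exponent is $\tfrac12$, not $\tfrac14$, so instead I take $\beta=\tfrac12$ getting $\sum_j j^{-2}\cdot j\cdot (t-s)^{1/2}=(t-s)^{1/2}\sum_j j^{-1}$, which \emph{diverges}. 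The resolution — and the main obstacle — is to split the sum at $j\sim (t-s)^{-1/2}$: for small $j$ use $(1-\e^{\lambda_j^M(t-s)})^2\le C j^4(t-s)^2$ and for large $j$ use the bound $\le 1$; the small-$j$ part contributes $C(t-s)^2\sum_{j\le (t-s)^{-1/2}} j^2 \le C(t-s)^2 (t-s)^{-3/2}=C(t-s)^{1/2}$ and the large-$j$ part contributes $C\sum_{j\ge (t-s)^{-1/2}} j^{-2}\le C(t-s)^{1/2}$, giving the sharp $(t-s)^{1/2}$. Keeping track of the boundary corrections from the interpolation $\varphi_j^M$ versus $\varphi_j$ (which are lower order, contributing $O(\Delta x)$ terms that are dominated since $\Delta x = 1/M \le$ relevant powers of $t-s$ only on a restricted range) is the remaining bookkeeping, and I expect it to be routine but tedious rather than conceptually hard.
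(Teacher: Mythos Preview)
Your proposal is correct and matches the paper's argument closely. One simplification: the relation $\int_0^1 \varphi_i(\kappa_M(y))\varphi_j(\kappa_M(y))\,\dd y=\delta_{ij}$ is \emph{exact} for $1\le i,j\le M-1$ (it is the discrete sine orthogonality on the grid, rewritten as a step-function integral), and the $x$-dependence is handled by the pointwise bound $|\varphi_j^M(x)|\le\sqrt{2}$, so there are no ``boundary corrections'' or lower-order terms to track --- the collapse to a single sum is clean. For part~(i) your split at $j\sim(t-s)^{-1/2}$ is precisely what the paper does. For part~(iii) the paper takes a slightly different route: it bounds $e^{2\lambda_j^M s}\le C_\alpha(j^2s)^{-\alpha}$ and $(1-e^{\lambda_j^M(t-s)})^2\le 1\wedge j^4(t-s)^2$, then splits at $j\sim(t-s)^{-1/2}$ again as in part~(i); your direct interpolation $(1-e^{-a})^2\le a^{\alpha-1/2}$ followed by $\sum_j j^{2\alpha-1}e^{-cj^2s}\le Cs^{-\alpha}$ is a valid and arguably cleaner alternative. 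Note that since $\min(1,a)^2\le a^{\gamma}$ holds for all $\gamma\in[0,2]$, your bound $(1-e^{-a})^2\le Ca^{\alpha-1/2}$ is already valid over the full range $\alpha\in(\tfrac12,\tfrac52)$, and your case distinction at $\alpha=\tfrac32$ is unnecessary.
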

\begin{proof}
Recall that
$$G^M(t,x,y)=\sum_{j=1}^{M-1}\exp(\lambda_j^M t)\varphi_j^M(x)\varphi_j(\kappa_M(y)),$$
where $\kappa_M(y)=\frac{[My]}{M},$ $\varphi_j^M(x)=\varphi_j\left(\frac{l}{M}\right)$ for $x=\frac{l}{M}$
and
\[
\varphi_j^M(x)=\varphi_j\left(\frac{l}{M}\right)+(Mx-l)\left(\varphi_j
\left(\frac{l+1}{M}\right)-\varphi_j\left(\frac{l}{M}\right)\right),\quad \text{if }
x\in \left(\frac{l}{M},\frac{l+1}{M}\right].
\]

We first prove $(i)$. Observe that a general version of this result is used in the proof of
\cite[Lem.~3.6]{gyongy1} (see the term $A_1^{2p}$ therein).
Using the definition of the discrete Green function, we have
\begin{align*}
&\int_0^s \int_0^1 |G^M(t-r,x,y)-G^M(s-r,x,y)|^2\,\dd y\,\dd r  \\
&\quad= \int_0^s\int_0^1\left|\sum_{j=1}^{M-1}(\exp(\lambda_j^M (t-r))-\exp(\lambda_j^M (s-r)))\varphi_j^M(x)\varphi_j(\kappa_M(y))\right|^2\,\dd y\,\dd r.
\end{align*}
At this point, we use the fact that  the vectors
\[
 e_j=\left(\sqrt{\frac2M} \sin\left(j\frac kM \pi\right),\; k=1,\dots,M-1\right), \quad j=1,\dots,M-1,
\]
form an orthonormal basis of $\mathbb{R}^{M-1}$, which implies that
\begin{equation}
\int_0^1 \varphi_j(\kappa_M(y)) \varphi_l(\kappa_M(y))\, \dd y= \delta_{\{j=l\}}.
\label{eq:8}
\end{equation}
Hence, using also the definitions of $\varphi_j^M$ and $\lambda_j^M$,
\begin{align*}
&\int_0^s \int_0^1 |G^M(t-r,x,y)-G^M(s-r,x,y)|^2\,\dd y\,\dd r\\
&\quad=\int_0^s\sum_{j=1}^{M-1} |\exp(\lambda_j^M (t-r)-\exp(\lambda_j^M (s-r))|^2|\varphi_j^M(x)|^2\,\dd r\\
&\quad\leq C\sum_{j=1}^{M-1}\int_0^s\exp(\lambda_j^M (s-r))^2\,\dd r |1-\exp(\lambda_j^M(t-s))|^2 \\
&\quad\leq C\sum_{j=1}^{M-1}\int_0^s\exp(-2j^2\pi^2 c_j^M (s-r))\,\dd r (1-\exp(-j^2\pi^2c_j^M(t-s)))^2 \\
&\quad\leq C\sum_{j=1}^\infty j^{-2}(j^4(t-s)^2\wedge 1).
\end{align*}
Here we have used that $1-\exp(-x)\leq x$, and that $(c_j^M)^{-1}$ is bounded. Let
$N:=\left[\frac{1}{\sqrt{t-s}}\right]$, where $[\cdot]$ denotes the integer part, and observe that 
(by comparing sums with integrals) 
\begin{align*}
\sum_{j=1}^\infty j^{-2}(j^4(t-s)^2\wedge 1) & =
\sum_{j=1}^N j^2 (t-s)^2 + \sum_{j=N+1}^\infty j^{-2} \\
& \leq C (t-s)^2 (N+1)^3 + (N+1)^{-1} \\
& \leq C (t-s)^2 \left(\left[\frac{\sqrt{t-s}+1}{\sqrt{t-s}}\right]\right)^3 + (N+1)^{-1} \\
& \leq C (t-s)^2 \left(\left[\frac{1}{\sqrt{t-s}}\right]\right)^3 +
\left(\frac{1}{\sqrt{t-s}}\right)^{-1}\\
& \leq C (t-s)^{\frac12}.
\end{align*}
This proves part $(i)$.
The proof of $(ii)$ follows by similar arguments as those used in the proofs of
 \cite[Lem.~8.1, Thm~8.2]{Walsh}. First note that, as above, we have
\begin{align*}
\int_0^1 |G^M(t,x,y)|^2\,\dd y&=\int_0^1 \left|\sum_{j=1}^{M-1}\exp(\lambda_j^M t)\varphi_j^M(x)\varphi_j(\kappa_M(y))\right|^2\,\dd y\\
&\leq C\sum_{j=1}^{M-1}\exp(-2j^2\pi^2c_j^M t).
\end{align*}
The estimate in $(ii)$ now follows from the inequality
$$\sum_{j=1}^{M-1}\exp(-2j^2\pi^2c_j^M t)\leq C\left(M\wedge\frac{1}{\sqrt{2c_j^M}\pi\sqrt{t}}\right),$$
which is proved in \cite[Lem.~8.1]{Walsh}.

We now prove $(iii)$. Using the definition of the discrete Green function, properties of $\varphi_j$, and the definition of $\lambda_j^M$,
we have
\begin{align*}
\int_0^1|G^M(t,x,y)-G^M(s,x,y)|^2\,\dd y &\leq \sum_{j=1}^{M-1} |\exp(\lambda_j^M t)-\exp(\lambda_j^M s)|^2\\
&\leq \sum_{j=1}^{M-1} |\exp(-j^2\pi^2 c_j^M s)|^2|1-\exp(-j^2\pi^2 c_j^M(t-s))|^2.
\end{align*}
Since $1-\exp(-x)\leq x$ and $\exp(-x^2)\leq C_{\alpha}|x|^{-\alpha}$, for all $\alpha\in \mathbb{R}$, it follows that
\begin{align*}
\int_0^1|G^M(t,x,y)-G^M(s,x,y)|^2\,\dd y &\leq C_{\alpha}\sum_{j=1}^{M-1} j^{-2\alpha} s^{-\alpha}(1\wedge j^4(t-s)^2)\\
&\leq\tilde C_1(t-s)^2s^{-\alpha}\sum_{j=1}^N j^{4-2\alpha}+\tilde C_2s^{-\alpha}\sum_{j=N+1}^\infty j^{-2\alpha},
\end{align*}
where $N=\left[\frac{1}{\sqrt{t-s}}\right]$
and $\tilde C_1$ and $\tilde C_2$ are independent of $t$ and $s$.
We now estimate these two terms as we did in the proof of part $(i)$.
Namely, whenever $\alpha<\frac52$ we have that
\begin{align*}
(t-s)^2s^{-\alpha}\sum_{j=1}^N j^{4-2\alpha}&\leq C(t-s)^2s^{-\alpha}(N+1)^{5-2\alpha}\\
&\leq C(t-s)^{\alpha-1/2}s^{-\alpha},
\end{align*}
using the fact that $N+1\leq\frac{1+\sqrt{t-s}}{\sqrt{t-s}}\leq\frac{C_T}{\sqrt{t-s}}$.
For the second term, if $\alpha>\frac12$ we obtain
\begin{align*}
s^{-\alpha}\sum_{j=N+1}^\infty j^{-2\alpha}&=s^{-\alpha}(N+1)^{-2\alpha}+s^{-\alpha}\sum_{j=N+2}^\infty j^{-2\alpha}
\leq Cs^{-\alpha}(N+1)^{1-2\alpha}\\
&\leq (t-s)^{\alpha-1/2}s^{-\alpha}.
\end{align*}
Collecting these two estimates leads to the conclusion of the theorem.
\end{proof}

\medskip

For the numerical analysis of the exponential method applied to the nonlinear stochastic heat equation \eqref{heateq} presented in the next subsection,
the initial data $u_0$ will be in the space $H^\alpha([0,1])$, which we now define.
For $\alpha\in\mathbb{R}$, we define the space $H^\alpha([0,1])$
to be the set of functions $g\colon[0,1]\to\R$ such that
\begin{equation}
\norm{g}_{\alpha}=\left(\sum_{j=1}^\infty(1+j^2)^\alpha|\left\langle g,\varphi_j\right\rangle|^2\right)^{1/2}<\infty,
\label{eq:22}
\end{equation}
where we recall that $\varphi_j(x)=\sqrt{2}\sin(jx\pi)$, for $j\geq 1$.
The inner product in the above sum stands for the usual $L^2([0,1])$ inner product.
Further restrictions on $\alpha$ will be made in the results below.
For the sake of simplicity, the space $H^\alpha([0,1])$ will be denoted by
$H^\alpha$.
Note that this space is a
subspace of the fractional Sobolev space of fractional order $\alpha$ and integrability order
$p=2$ (see \cite{Triebel}). Moreover, for any $\alpha>\frac12$, the space
$H^\alpha$ is continuously embedded in the space of $\delta$-H\"older-continuous functions
for all $\delta\in (0,\alpha-\frac12)$ (see, e.g., \cite[Thm.~8.2]{MR2944369}).

\medskip

Finally, we need the following regularity results for the finite difference approximation $u^M$ given by \eqref{spaceapp}.
\begin{proposition}\label{reg}
Assume that $f$ and $\sigma$ satisfy the condition \eqref{LG}. 
\begin{enumerate}
\item Assume that $u_0\in C([0,1])$ with $u_0(0)=u_0(1)=0$.
For any $0< s\leq t\leq T$, any $p\geq 1$, and $\frac12<\alpha<\frac52$, we have
$$
\sup_{M\geq 1} \sup_{x\in[0,1]}\E[|u^M(t,x)-u^M(s,x)|^{2p}]\leq Cs^{-\alpha p}(t-s)^{\nu p},
$$
where $\nu=\frac12 \land (\alpha-\frac12)$.
\item Assume that $u_0\in H^{\beta}([0,1])$, 
with $u_0(0)=u_0(1)=0$, for some $\beta>\frac{1}{2}$. For any $0\leq s\leq t\leq T$ and any $p\geq 1$, we have
$$
\sup_{M\geq 1} \sup_{x\in[0,1]}\E[|u^M(t,x)-u^M(s,x)|^{2p}]\leq C(t-s)^{\tau p},
$$
where $\tau = \frac12\wedge(\beta-\frac12)$.
\end{enumerate}
\end{proposition}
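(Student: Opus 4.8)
The plan is to work from the mild equation~\eqref{spaceapp} and write the increment as $u^M(t,x)-u^M(s,x)=I_1+I_2+I_3$, where
\[
I_1=\int_0^1\bigl(G^M(t,x,y)-G^M(s,x,y)\bigr)u_0(\kappa_M(y))\,\dd y
\]
is the deterministic initial-data term, and $I_2,I_3$ are the drift and stochastic terms; in $I_2$ and $I_3$ one further splits the difference of the two time integrals over $[0,t]$ and $[0,s]$ into a \emph{difference} integral over $[0,s]$ carrying the kernel $G^M(t-r,x,y)-G^M(s-r,x,y)$ plus a \emph{near-diagonal} integral over $[s,t]$ carrying $G^M(t-r,x,y)$. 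It then suffices to bound $\E[|I_i|^{2p}]$, $i=1,2,3$, separately and use $(a+b+c)^{2p}\le C(a^{2p}+b^{2p}+c^{2p})$. The bounds on $I_2$ and $I_3$ are common to both parts and carry no singularity in $s$: for $I_3$, apply the Burkholder--Davis--Gundy (factorization) inequality for Walsh-type stochastic integrals, which reduces $\E[|I_3|^{2p}]$ to a power of $\int_0^s\!\int_0^1|G^M(t-r,x,y)-G^M(s-r,x,y)|^2\,\dd y\,\dd r$ (resp.\ $\int_s^t\!\int_0^1|G^M(t-r,x,y)|^2\,\dd y\,\dd r$) times $\sup_{r,z}\bigl(\E[|\sigma(r,z,u^M(r,z))|^{2p}]\bigr)^{1/p}$, the latter being bounded by a constant via the linear growth~\eqref{LG} and Proposition~\ref{xreg}; Lemma~\ref{greenreg}$(i)$ handles the $[0,s]$-piece and Lemma~\ref{greenreg}$(ii)$ the $[s,t]$-piece, giving $\E[|I_3|^{2p}]\le C(t-s)^{p/2}$. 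For $I_2$ one uses H\"older's inequality with respect to the finite measures $|G^M(t-r,x,y)-G^M(s-r,x,y)|\,\dd y\,\dd r$ and $|G^M(t-r,x,y)|\,\dd y\,\dd r$, the same moment bound for $f$, and the $L^1$-consequences (via Cauchy--Schwarz in $(r,y)$) of Lemma~\ref{greenreg}$(i)$--$(ii)$, yielding $\E[|I_2|^{2p}]\le C(t-s)^{p/2}$ as well.

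For part~1 it remains to estimate $I_1$, which is deterministic. Cauchy--Schwarz in $y$ and Lemma~\ref{greenreg}$(iii)$ (applicable precisely for $\alpha\in(\tfrac12,\tfrac52)$) give
\[
|I_1|\le\norm{u_0}_\infty\Bigl(\int_0^1|G^M(t,x,y)-G^M(s,x,y)|^2\,\dd y\Bigr)^{1/2}\le C\,s^{-\alpha/2}(t-s)^{(\alpha-\frac12)/2}.
\]
Since $\alpha-\tfrac12\ge\nu$ and $\tfrac12\ge\nu$ and $0<s\le T$, the three quantities $\E[|I_1|^{2p}],\E[|I_2|^{2p}],\E[|I_3|^{2p}]$ are all $\le Cs^{-\alpha p}(t-s)^{\nu p}$ (absorbing harmless powers of $T$ and of $T/s\ge1$), which proves part~1.

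Part~2 is the core of the argument: $I_1$ must be controlled \emph{without} the $s^{-\alpha}$ blow-up of Lemma~\ref{greenreg}$(iii)$, using instead that $u_0\in H^\beta$ with $\beta>\tfrac12$. Expand $u_0=\sum_{k\ge1}c_k\varphi_k$, so that $\sum_{k\ge1}(1+k^2)^\beta|c_k|^2=\norm{u_0}_\beta^2<\infty$ and, since $\beta>\tfrac12$, the series converges absolutely and uniformly, hence may be substituted into $I_1$ and integrated term by term. Using the orthogonality~\eqref{eq:8} and the aliasing identity $\varphi_k(x_l)=\pm\varphi_{r(k)}(x_l)$ on the grid, where $r(k)\in\{0,1,\dots,M-1\}$ is the folded index of $k$ and, crucially, $r(k)\le k$ for every $k$, and rearranging the absolutely convergent double series, one obtains
\[
I_1=\sum_{k\ge1}(\pm)\,c_k\bigl(\exp(\lambda_{r(k)}^M t)-\exp(\lambda_{r(k)}^M s)\bigr)\varphi_{r(k)}^M(x).
\]
Bounding $|\varphi_j^M(x)|\le\sqrt2$, $|\exp(\lambda_j^M t)-\exp(\lambda_j^M s)|\le C\bigl(j^2(t-s)\wedge1\bigr)$ and using $r(k)\le k$, we get $|I_1|\le C\sum_{k\ge1}|c_k|\bigl(k^2(t-s)\wedge1\bigr)$, whence by Cauchy--Schwarz with weight $(1+k^2)^\beta$,
\[
|I_1|\le C\Bigl(\sum_{k\ge1}(1+k^2)^{-\beta}\bigl(k^4(t-s)^2\wedge1\bigr)\Bigr)^{1/2}\norm{u_0}_\beta.
\]
Comparing this series with integrals and splitting the sum at $k=\bigl[(t-s)^{-1/2}\bigr]$, exactly as in the proof of Lemma~\ref{greenreg}, bounds the bracket by $C(t-s)^{\beta-\frac12}$ for $\tfrac12<\beta<\tfrac52$ and by $C(t-s)^{1/2}$ for $\beta\ge\tfrac52$ --- the hypothesis $\beta>\tfrac12$ being exactly what makes the high-frequency tail $\sum_{k>N}k^{-2\beta}$ summable. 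Thus $|I_1|\le C(t-s)^{\tau/2}$ with $\tau=\tfrac12\wedge(\beta-\tfrac12)$, and combining with the $s$-free bounds on $I_2,I_3$ proves part~2.

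I expect this last step to be the main obstacle. The naive route --- bounding the ``discrete Fourier coefficients'' $\tfrac1M\sum_l\varphi_j(x_l)u_0(x_l)$ of $u_0$ by $\norm{u_0}_\beta$ uniformly in $M$ --- fails for $\beta\in(\tfrac12,1]$, because the aliasing series for these coefficients is square-summable against the weight $(1+j^2)^\beta$, after a Minkowski estimate, only when $\beta>1$. Keeping instead the \emph{continuous} coefficients $c_k$, which carry the $k^{-\beta}$ decay, and noting that grid folding can only decrease the eigenvalue factor since $r(k)\le k$, circumvents the difficulty; the $s$-dependence then disappears simply by discarding the factors $\exp(\lambda_j^M s)\le1$.
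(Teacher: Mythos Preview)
Your proof is correct and follows the same overall architecture as the paper: decompose $u^M(t,x)-u^M(s,x)$ into initial-data, drift, and stochastic pieces, bound the latter two by $C(t-s)^{p/2}$ uniformly in $s$ (the paper simply cites \cite[Lem.~3.6]{gyongy1} for this; you spell it out via BDG/Minkowski and Lemma~\ref{greenreg}$(i)$--$(ii)$), and treat the initial-data term separately, using Lemma~\ref{greenreg}$(iii)$ for part~1.

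The one genuine difference is in part~2. The paper asserts the identity $\langle u_0(\kappa_M(\cdot)),\varphi_j(\kappa_M(\cdot))\rangle=\langle u_0,\varphi_j\rangle$ (citing \cite{Quer-Sardanyons2006}) and then applies Cauchy--Schwarz in $j\in\{1,\dots,M-1\}$ against the weight $j^{2\beta}$. You instead expand $u_0=\sum_{k\ge1}c_k\varphi_k$ in the \emph{continuous} basis, use the grid aliasing $\varphi_k(x_l)=\pm\varphi_{r(k)}(x_l)$ with $r(k)\le k$, and apply Cauchy--Schwarz in $k$. The two routes arrive at the same series $\sum_k k^{-2\beta}(k^4(t-s)^2\wedge1)$ and the same splitting estimate. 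Your argument is more self-contained and makes explicit why the discrete coefficients inherit enough decay from the continuous ones (folding can only decrease the eigenvalue factor); the paper's is shorter once the cited identity is accepted. Your closing remark that the ``naive'' bound on the discrete Fourier coefficients would fail for $\beta\in(\tfrac12,1]$ is a useful observation that the paper's shortcut obscures.
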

\begin{proof}
For ease of presentation, we consider functions $f(u)$ and $\sigma(u)$ depending only on $u$.
Let us first define
\begin{align*}
F^M(t,x)&:=\int_0^t\int_0^1 G^M(t-s,x,y)f(u^M(s,y))\,\dd y\,\dd s\\
H^M(t,x)&:=\int_0^t\int_0^1 G^M(t-s,x,y)\sigma(u^M(s,y))\,\dd W(s,y).
\end{align*}
Then we have
\begin{align*}
u^M(t,x)-u^M(s,x)&=\int_0^1(G^M(t,x,y)-G^M(s,x,y))u_0(\kappa_M(y))\,\dd y\\
&\quad+F^M(t,x)-F^M(s,x)\\
&\quad+H^M(t,x)-H^M(s,x).
\end{align*}
By \cite[Lem.~3.6]{gyongy1}, the last two terms can be estimated by
\begin{align}\label{est1}
\E[|F^M(t,x)-F^M(s,x)|^{2p}]+\E[|H^M(t,x)-H^M(s,x)|^{2p}]\leq C|t-s|^\frac{p}{2}.
\end{align}
It remains to estimate the term involving $u_0$.

Assume first that $u_0\in C([0,1])$. We use the third part of Lemma~\ref{greenreg} to get the following estimate:
\begin{align*}
&\left(\E\left[\left|\int_0^1(G^M(t,x,y)-G^M(s,x,y))u_0(\kappa_M(y))\,\dd y\right|^{2p}\right]\right)^{1/p}\\
&\quad=\left|\int_0^1(G^M(t,x,y)-G^M(s,x,y))u_0(\kappa_M(y))\,\dd y\right|^2\\
&\quad\leq C\int_0^1 |G^M(t,x,y)-G^M(s,x,y)|^2|u_0(\kappa_M(y))|^2\,\dd y\\
&\quad\leq Cs^{-\alpha}(t-s)^{\alpha-\frac{1}{2}}.
\end{align*}
Collecting the above estimates and taking into account that $s^{-\alpha p}\geq T^{-\alpha p}$ in \eqref{est1}, we get
$$
\sup_{M\geq 1} \sup_{x\in[0,1]}\E[|u^M(t,x)-u^M(s,x)|^{2p}]\leq
C s^{-\alpha p}(t-s)^{\nu p},
$$
where $\nu=\frac12 \land (\alpha-\frac12)$.

Assume now that $u_0\in H^{\beta}([0,1])$ for some $\beta>\frac{1}{2}$.
Using the explicit expression of $G^M$, Cauchy-Schwarz inequality and that $1-\exp(-x)\leq x$,
we have
\begin{align*}
&\left|\int_0^1(G^M(t,x,y)-G^M(s,x,y))u_0(\kappa_M(y))\,\dd y\right|^{2p}\\
&\quad=\left|\sum_{j=1}^{M-1}(\exp(\lambda_j^M t)-\exp(\lambda_j^M s))\langle u_0(\kappa_M(y)),\varphi_j(\kappa_M(y))\rangle\varphi_j^M(x)\right|^{2p}\\
&\quad\leq \left(\sum_{j=1}^{M-1} |\exp(\lambda_j^M t)-\exp(\lambda_j^M s)| |\langle u_0,\varphi_j\rangle|\right)^{2p}\\
&\quad \leq C\left(\sum_{j=1}^{M-1} j^{-2\beta}|\exp(\lambda_j^M t)-\exp(\lambda_j^M s)|^2\right)^p\left(\sum_{j=1}^\infty j^{2\beta}|\langle u_0,\varphi_j\rangle|^2\right)^p\\
&\quad\leq C\left(\sum_{j=1}^{M-1}j^{-2\beta}\exp(2\lambda_j^M s)|\exp(\lambda_j^M (t-s))-1|^2\right)^p\norm{u_0}_{\beta}^{2p}\\
&\quad\leq C\left(\sum_{j=1}^{\infty}j^{-2\beta}(j^4(t-s)^2\wedge 1)\right)^p.
\end{align*}
Here we have used that $\langle u_0(\kappa_M(y)),\varphi_j(\kappa_M(y))\rangle = \langle u_0, \varphi_j\rangle$,
which can be verified by a simple calculation (see equation $(21)$ in \cite{Quer-Sardanyons2006}).
Furthermore, for $\beta>\frac{5}{2}$, we have
$$\sum_{j=1}^{\infty}j^{-2\beta}(j^4(t-s)^2\wedge 1)\leq C(t-s)^2.$$
On the other hand, if $\beta\in (\frac{1}{2},\frac{5}{2}]$,
$$\sum_{j=1}^{\infty}j^{-2\beta}(j^4(t-s)^2\wedge 1) = (t-s)^2 \sum_{j=1}^{N}j^{4-2\beta} +
\sum_{j=N+1}^\infty j^{-2\beta},$$
where $N=\left[\frac{1}{\sqrt{t-s}}\right]$, and $[\cdot]$ denotes the integer part.
Note that
$$(t-s)^2\sum_{j=1}^{N}j^{4-2\beta}\leq C(t-s)^{\beta-\frac{1}{2}}$$
and
$$\sum_{j=N+1}^\infty j^{-2\beta}\leq C(t-s)^{\beta-\frac{1}{2}}.$$
Hence, we arrive at the estimate
\begin{align}\label{est2}
\E\left[\left|\int_0^1(G^M(t,x,y)-G^M(s,x,y))u_0(\kappa_M(y))\,\dd y\right|^{2p}\right]\leq C(t-s)^{\gamma p},
\end{align}
where $\gamma = 2 \wedge (\beta-\frac{1}{2})$, for $\beta>\frac{1}{2}$. By the estimates \eqref{est1} and \eqref{est2}
we have
\begin{align*}
\sup_{M\geq 1} \sup_{x\in[0,1]}\E[|u^M(t,x)-u^M(s,x)|^{2p}]&\leq C(|t-s|^\frac{p}{2}+|t-s|^{\gamma p})\\
&\leq C|t-s|^{\tau p},
\end{align*}
where $\tau=\frac12 \wedge (\beta-\frac12)$, for $\beta>\frac{1}{2}$.
\end{proof}

\subsection{Full discretization: $\boldmath{L^{2p}(\Omega)}$-convergence}
\label{sect:temporal}

This section is devoted to introduce the time discretization of
the semi-discrete problem presented in the previous subsection, which will be
denoted by $u^{M,N}$. Next we prove properties of $u^{M,N}$
which will be needed in the sequel and we will state and prove the
main result of the present section (cf. Theorem \ref{th:time}
below). Finally, some numerical experiments will be performed in
order to illustrate the theoretical results obtained so far.

\medskip

We start by discretizing the space discrete solution \eqref{spacediscsol} in time using an exponential integrator.
For an integer $N\geq1$ and some fixed final time $T>0$, let $\Delta t=\frac{T}{N}$ and define the discrete times $t_n=n\Delta t$ for $n=0,1,\ldots,N$.
For simplicity of presentation, we consider that the functions $f$ and $\sigma$ only depend
on the third variable.
Let us now consider the mild equation \eqref{spacediscsol} on the small time interval $[t_n,t_{n+1}]$ written
in a more compact form
(recall the notation $u^M_m(t)=u^M(t,x_m)$), as follows:
$$
u^M(t_{n+1})=e^{A\Delta t}u^M(t_n)+\int_{t_n}^{t_{n+1}}e^{A(t_{n+1}-s)}F(u^M(s))\,\dd s+
\int_{t_n}^{t_{n+1}}e^{A(t_{n+1}-s)}\Sigma(u^M(s))\, \dd W^M(s),
$$
with the finite difference matrix $A:=M^2D$, the vector $F(u^M(s))$
with entries $f(u^M_m(s))$ for $m=1, 2, \ldots, M-1$, and the
diagonal matrix $\Sigma(u^M(s))$ with elements
$\sqrt{M}\sigma(u^M_m(s))$ for $m=1, 2, \ldots, M-1$. The matrix $D$
has been defined in Section \ref{sect:spatial}. We next discretize
the integrals in the above mild equation by freezing the integrands
at the left endpoints of the intervals, so we obtain the explicit
exponential integrator (omitting the explicit dependence on $M$ for
clarity)
\begin{equation}
\begin{aligned}
{{\mathcal U}}^0&:=u^M(0),\\
{{\mathcal U}}^{n+1}&:=e^{A\Delta t}\bigl({{\mathcal U}}^n+F({{\mathcal U}}^n)\Delta t+\Sigma({{\mathcal U}}^n)\Delta W^n\bigr),
\end{aligned}
\label{sexp}
\end{equation}
where the terms $\Delta W^n:=W^{M}(t_{n+1})-W^M(t_n)$ denote the $(M-1)$-dimensional Wiener increments.
The above formulation of the exponential integrator will be used for the practical computations presented below.

\begin{remark}
In some particular situations, alternative approximations of the integrals in the mild equations are possible,
see for instance \cite{MR2652783,MR2471778,MR3047942}. This could possibly lead to better numerical schemes or
improved error estimates, which will be investigated in future works.
\end{remark}

For the theoretical parts presented below, we will make use of the discrete Green function $G^M$ (see \eqref{eq:1})
in order to write the numerical scheme in a more suitable form.
We thus obtain the approximation $U_m^{n+1}\approx u(t_{n+1},x_m)$ given by (with a slight abuse of notations for the functions $f$ and $\sigma$)
\begin{align*}
U_m^{n+1}&=\frac{1}{M}\sum_{l=1}^{M-1}\sum_{j=1}^{M-1}\exp(\lambda_j^M\Delta t)\varphi_j(x_m)\varphi_j(x_l)U_l^n\\
&\quad+\Delta t\frac{1}{M}\sum_{l=1}^{M-1}\sum_{j=1}^{M-1}\exp(\lambda_j^M\Delta t)\varphi_j(x_m)\varphi_j(x_l)f(U_l^n)\\
&\quad+\frac{1}{\sqrt{M}}\sum_{l=1}^{M-1}\sum_{j=1}^{M-1}\exp(\lambda_j^M\Delta t)\varphi_j(x_m)\varphi_j(x_l)\sigma(U_l^n)(W_l^M(t_{n+1})-W_l^M(t_n)).
\end{align*}
The above equation can be written in the equivalent form
\begin{align*}
U_m^{n+1}&=\int_0^1 G^M(t_{n+1}-t_n,x_m,y)U_{M\kappa_M(y)}^n\,\dd y\\
&\quad+\int_{t_n}^{t_{n+1}}\int_0^1 G^M(t_{n+1}-t_n,x_m,y)f(U_{M\kappa_M(y)}^n)\,\dd y\,\dd s\\
&\quad+\int_{t_n}^{t_{n+1}}\int_0^1 G^M(t_{n+1}-t_n,x_m,y)\sigma(U_{M\kappa_M(y)}^n)\,W(\dd s,\dd y),
\end{align*}
where we recall that
$$G^M(t,x,y)=\sum_{j=1}^{M-1}\exp(\lambda_j^M t)\varphi_j^M(x)\varphi_j(\kappa_M(y)),$$
and $\kappa_M(y)=\frac{[My]}{M}$, $\varphi_j^M(x)=\varphi_j(x_l)$ for $x=x_l$ and $\varphi_j^M(x)=\varphi_j(x_l)+(Mx-l)(\varphi_j(x_{l+1})-\varphi_j(x_l))$ for $x\in(x_l, x_{l+1}].$
In order to exhibit a more convenient mild form of the numerical solution $U_m^n$, we iterate the integral equation
above to obtain
\begin{align*}
U_m^{n+1}&=\int_0^1 G^M(t_{n+1},x_m,y)u_0(\kappa_M(y))\,\dd y\\
&\quad+\sum_{r=0}^n\int_{t_r}^{t_{r+1}}\int_0^1 G^M(t_{n+1}-t_r,x_m,y)f(U_{M\kappa_M(y)}^r)\,\dd y\,\dd s\\
&\quad+\sum_{r=0}^n\int_{t_r}^{t_{r+1}}\int_0^1 G^M(t_{n+1}-t_r,x_m,y)\sigma(U_{M\kappa_M(y)}^r)\,W(\dd s,\dd y),
\end{align*}
for all $m=1,\dots,M-1$ and $n=0,1,\dots,N$. This implies that
\begin{align}
U_m^{n+1}&=\int_0^1 G^M(t_{n+1},x_m,y)u_0(\kappa_M(y))\,\dd y \nonumber \\
&\quad+\int_0^{t_{n+1}}\int_0^1 G^M(t_{n+1}-\kappa_N^T(s),x_m,y)
f\big(U_{M\kappa_M(y)}^{\kappa_N^T(s)/\Delta t}\big)\,\dd y\,\dd s \nonumber \\
&\quad+ \int_0^{t_{n+1}} \int_0^1 G^M(t_{n+1}-\kappa_N^T(s),x_m,y)\sigma\big(U_{M\kappa_M(y)}^{\kappa_N^T(s)/\Delta t}\big)\,W(\dd s,\dd y),
\label{eq:2}
\end{align}
where we have used the notation $\kappa_N^T(s):=T\kappa_N(\frac{s}{T})$. Set
$u^{M,N}(t_n,x_m):=U_m^n$. Then, equation \eqref{eq:2} yields
\begin{align}
&u^{M,N}(t_n,x_m)=\int_0^1 G^M(t_n,x_m,y)u_0(\kappa_M(y))\,\dd y \nonumber \\
&\quad+\int_0^{t_n}\int_0^1 G^M(t_n-\kappa_N^T(s),x_m,y)
f(u^{M,N}(\kappa_N^T(s),\kappa_M(y)))\,\dd y\,\dd s \nonumber \\
&\quad+ \int_0^{t_n} \int_0^1 G^M(t_n-\kappa_N^T(s),x_m,y)
\sigma(u^{M,N}(\kappa_N^T(s),\kappa_M(y)))\,W(\dd s,\dd y).
\label{eq:3}
\end{align}

At this point, we will introduce the {\it{weak}} form associated to the full discretization scheme, and
in particular to equation \eqref{eq:3}. This will allow us to define a continuous version of the scheme,
which will be denoted by $u^{M,N}(t,x)$, with $(t,x)\in [0,T]\times [0,1]$.
More precisely, let $\{v(t,x),\, (t,x)\in[0,T]\times [0,1]\}$ be the unique $\mathcal{F}_t$-adapted
continuous random field satisfying the following: for all $\Phi\in C^\infty(\mathbb{R}^2)$ with
$\Phi(t,0)=\Phi(t,1)=0$ for all $t$, it holds
\begin{align}
 \int_0^1 v(t,\kappa_M(y))\Phi(t,y) \dd y = & \int_0^1 u_0(\kappa_M(y))\Phi(t,y)\, \dd y \nonumber \\
 & \quad + \int_0^t\int_0^1 v(s,\kappa_M(y)) \left(\Delta_M \Phi(s,y) +
 \frac{\partial \Phi}{\partial s}(s,y)\right)\, \dd y\, \dd s \nonumber \\
 & \quad + \int_0^t\int_0^1 f(v(\kappa_N^T(s),\kappa_M(y))) \Phi(s,y)\, \dd y\, \dd s \nonumber \\
 & \quad + \int_0^t\int_0^1 \sigma(v(\kappa_N^T(s),\kappa_M(y))) \Phi(s,y)\, W(\dd s,\dd y),
 \quad \mathbb{P}\text{-a.s.},
 \label{eq:6}
\end{align}
for all $t\in [0,T]$. Here, $\Delta_M$ denotes the discrete
Laplacian, which is defined by, recalling that $\Delta x=\frac1M$,
\[
 \Delta_M\Phi(s,y):= (\Delta x)^{-2} \left\{\Phi(s,y+\Delta x) - 2\Phi(s,y) + \Phi(s,y-\Delta x)\right\}.
\]

Let us prove that, on the time-space grid points, the random field $v$ fulfills equation \eqref{eq:3}.
That is, we have the following result.

\medskip

\begin{lemma}\label{lem:1}
 With the above notations at hand, we have that, for all $m=1,\dots,M-1$ and
 $n=0,1,\dots,N$,
 \begin{align}
&v(t_n,x_m)=\int_0^1 G^M(t_n,x_m,y)u_0(\kappa_M(y))\,\dd y \nonumber \\
&\quad+\int_0^{t_n}\int_0^1 G^M(t_n-\kappa_N^T(s),x_m,y)
f(v(\kappa_N^T(s),\kappa_M(y)))\,\dd y\,\dd s \nonumber \\
&\quad+ \int_0^{t_n} \int_0^1 G^M(t_n-\kappa_N^T(s),x_m,y)
\sigma(v(\kappa_N^T(s),\kappa_M(y)))\,W(\dd s,\dd y).
\label{eq:4}
\end{align}
\end{lemma}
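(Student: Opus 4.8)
The plan is to show that the random field $v$ defined through the weak formulation \eqref{eq:6} coincides, on the grid points $(t_n,x_m)$, with the mild-form solution given by \eqref{eq:3}--\eqref{eq:4}. This is the discrete analogue of the classical equivalence between the weak (distributional) formulation \eqref{eq:13} and the mild formulation \eqref{exactsol}, so I would follow the same strategy as in the continuous case (cf. Walsh \cite{walsh1} and the discussion preceding \eqref{exactsol}), but carried out at the level of the spatial grid, where the discrete Laplacian $\Delta_M$ plays the role of the second derivative and $G^M$ plays the role of $G$. Concretely, one observes that for fixed $n$ the function $(s,y)\mapsto G^M(t_n-\kappa_N^T(s),x_m,y)$ is (up to the piecewise-constant time argument) the kernel that solves the adjoint discrete heat equation $\partial_s\Phi + \Delta_M\Phi = 0$ with terminal data the discrete delta at $x_m$; the idea is to plug (a suitable regularization of) this kernel into \eqref{eq:6} as the test function $\Phi$.

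The key steps, in order, are as follows. First, I would record the semigroup identity for $G^M$: writing $A=M^2D$ and using the eigen-expansion \eqref{eq:1}, one has that $t\mapsto G^M(t,x_m,\cdot)$ restricted to grid points is $e^{At}$ acting on the discrete delta, so $G^M$ satisfies the discrete heat equation in its first/second arguments and the Chapman--Kolmogorov relation $\int_0^1 G^M(t,x_m,z)G^M(r,\kappa_M(z),y)\,\dd z = G^M(t+r,x_m,y)$ for the relevant arguments; this follows from the orthonormality relation \eqref{eq:8}. Second, fix $n$ and, for $t\le t_n$, consider $\Phi(s,y):=G^M(t_n-s,x_m,y)$ as a (formal) test function — it vanishes at $y=0,1$ because $\varphi_j^M$ does, and it satisfies $\partial_s\Phi(s,y) + \Delta_M\Phi(s,y)=0$ on the grid since $\Delta_M$ is the spatial generator associated with the eigenvalues $\lambda_j^M$ (here one uses $\Delta_M\varphi_j^M(y) = \lambda_j^M\varphi_j^M(y)$ on grid points, which is exactly how $D$ and $\lambda_j^M$ were set up in Section~\ref{sect:spatial}). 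Third, substitute this $\Phi$ into \eqref{eq:6}: the left-hand side collapses to $v(t_n,x_m)$ by \eqref{eq:8} (orthonormality of the $\varphi_j$ at grid points), the term with $\Delta_M\Phi + \partial_s\Phi$ vanishes, the initial term becomes $\int_0^1 G^M(t_n,x_m,y)u_0(\kappa_M(y))\,\dd y$, and the $f$- and $\sigma$-integrals become exactly the two remaining terms of \eqref{eq:4} once one notes $G^M(t_n-s,x_m,\cdot)$ evaluated with $s$ replaced by $\kappa_N^T(s)$ in the integrands — here one must use the semigroup property to pass from $G^M(t_n-s,\cdot)$ to $G^M(t_n-\kappa_N^T(s),\cdot)$, i.e.\ the fact that on each subinterval $[t_r,t_{r+1})$ the integrand is frozen and $\int_0^1 G^M(t_n-t_{r+1},x_m,z)G^M(t_{r+1}-s,\kappa_M(z),y)\,\dd z = G^M(t_n-s,x_m,y)$ lets one rewrite things; alternatively, and more cleanly, iterate \eqref{eq:6} over the subintervals exactly as \eqref{eq:2} was derived from the one-step scheme, thereby matching \eqref{eq:3} term by term.

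I expect the main obstacle to be the rigorous justification of using $\Phi(s,y)=G^M(t_n-s,x_m,y)$ as a test function: strictly speaking \eqref{eq:6} is only posited for $\Phi\in C^\infty(\mathbb R^2)$ with the boundary conditions, whereas $G^M$ is merely piecewise linear in space and only defined for nonnegative time, so it is not smooth and the map $s\mapsto t_n-s$ hits $0$ at the endpoint. The standard remedy — which I would carry out — is an approximation argument: replace $G^M(t_n-s,x_m,y)$ by $G^M(t_n+\varepsilon-s,x_m,y)$ to keep the time argument bounded away from $0$, mollify in the spatial variable to obtain a genuine $C^\infty$ test function respecting the Dirichlet condition, insert it into \eqref{eq:6}, and pass to the limit (mollification parameter to $0$, then $\varepsilon\to0$) using the continuity of $v$, the explicit finite sums defining $G^M$, and dominated convergence together with an Itô-isometry bound on the stochastic integral (the kernel bounds in Lemma~\ref{greenreg}(ii) control the stochastic term uniformly). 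The remaining steps — the semigroup/eigenfunction identities and the bookkeeping that turns the iterated one-step relation into \eqref{eq:4} — are routine given how $D$, $\lambda_j^M$, $\varphi_j^M$ and $\kappa_N^T$ were defined, and follow the pattern already used to derive \eqref{eq:2}--\eqref{eq:3}.
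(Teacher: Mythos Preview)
Your overall strategy---inserting the discrete Green kernel as a test function in the weak form \eqref{eq:6}---is the paper's approach, and your treatment of the spatial variable (mollify $\delta_{x_m}$, then use the orthonormality \eqref{eq:8} to collapse the left-hand side to $v(t_n,x_m)$) matches the paper almost verbatim. The gap is in your choice of the \emph{time} dependence of the test function.

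You take $\Phi(s,y)=G^M(t_n-s,x_m,y)$, which does satisfy $\partial_s\Phi+\Delta_M\Phi=0$ and hence kills the linear term in \eqref{eq:6}. But with this $\Phi$, the $f$- and $\sigma$-integrals come out carrying the kernel $G^M(t_n-s,x_m,y)$, \emph{not} $G^M(t_n-\kappa_N^T(s),x_m,y)$ as \eqref{eq:4} requires. Your proposed repair via the semigroup identity does not close this gap: Chapman--Kolmogorov factors $G^M(t_n-s)$ as $G^M(t_n-t_{r+1})\ast G^M(t_{r+1}-s)$, but $\int_{t_r}^{t_{r+1}} G^M(t_{r+1}-s,\cdot,y)\,\dd s$ is not a point mass, so you cannot turn $\int_{t_r}^{t_{r+1}} G^M(t_n-s,x_m,y)h(y)\,\dd y\,\dd s$ into $\Delta t\int_0^1 G^M(t_n-t_r,x_m,y)h(y)\,\dd y$. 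The two quantities are genuinely different, and the formula you would obtain is the mild form of $u^M$ (with frozen $f,\sigma$ arguments), not that of $u^{M,N}$.

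The paper avoids this by building $\kappa_N^T$ into the test function from the outset: it sets $\Phi(s,y)=G^M_{t-\kappa_N^T(s)}(\phi,y):=\int_0^1 G^M(t-\kappa_N^T(s),z,y)\phi(z)\,\dd z$ for smooth $\phi$, and only afterwards lets $\phi=\phi_\epsilon\to\delta_x$. With this choice the kernel sitting inside the $f$- and $\sigma$-integrals is already $G^M(t-\kappa_N^T(s),\cdot,\cdot)$, so equation \eqref{eq:7} falls out directly; specializing to $t=t_n$, $x=x_m$ and invoking \eqref{eq:8} then yields \eqref{eq:4}. Your alternative suggestion---iterate \eqref{eq:6} over the subintervals $[t_r,t_{r+1}]$ exactly as \eqref{eq:2} was derived from the one-step recursion---is a legitimate route and arguably the most transparent one, but it is not what the paper does, and in your write-up it is only a one-line aside; if you pursue it you need to actually carry out the iteration rather than invoke it.
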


\begin{proof}
 We will follow some of the arguments developed in the proof of \cite[Thm.~3.2]{walsh1}. Indeed, for
 any $\phi\in C^\infty(\mathbb{R})$ and any $(t,y)\in [0,T]\times [0,1]$, we define
 \[
 G_t^M(\phi,y):=\int_0^1 G^M(t,z,y)\phi(z)\, \dd z.
 \]
 Since the Green function $G^M$ solves the discretized homogeneous heat equation with Dirichlet
 boundary conditions, that is, we have $G^M(t,x,0)=G^M(t,x,1)=0$ and, for any
 fixed $x\in (0,1)$,
 \[
  \frac{\partial}{\partial t} G^M(t,x,y)-\Delta_M G^M(t,x,y)=0,
 \]
we can infer that
\begin{align*}
 G_t^M(\phi,y) & = \int_0^1 \left(G^M(0,z,y)+\int_0^t \Delta_M G^M(s,z,y) \dd s\right) \phi(z)\, \dd z \\
 & = \int_0^1 G^M(0,z,y)\phi(z)\, \dd z + \int_0^t\int_0^1 \Delta_M G^M(s,z,y)\phi(z)\, \dd z\,\dd s.
\end{align*}
Hence $\displaystyle\frac{\partial}{\partial t}G_t^M(\phi,y)= \int_0^1 \Delta_M
G^M(s,z,y)\phi(z)\, \dd z$. On the other hand, since
\[\Delta_M
G_t^M(\phi,y)=\int_0^1 \Delta_M G^M(t,z,y)\phi(z)\, \dd z,\]
we deduce
that
\begin{equation}
 \frac{\partial}{\partial t}G_t^M(\phi,y) - \Delta_M G_t^M(\phi,y)=0,
\label{eq:5}
 \end{equation}
with $(t,y)\in [0,T]\times [0,1]$.

At this point, we take $\Phi(s,y)=G_{t-\kappa_N^T(s)}^M (\phi,y)$, with $t\in[0,T]$ and
$\phi\in C^\infty(\mathbb{R})$, and plug this $\Phi$ in \eqref{eq:6}. Thus, by \eqref{eq:5}
we get that
\begin{align*}
 \int_0^1 v(t,\kappa_M(y)) G_{t-\kappa_N^T(t)}^M (\phi,y)\,\dd y & =
 \int_0^1 u_0(\kappa_M(y)) G_t^M(\phi,y)\,\dd y \\
 & \quad + \int_0^t\int_0^1 f(v(\kappa_N^T(s),\kappa_M(y))) G_{t-\kappa_N^T(s)}^M (\phi,y)\, \dd y\, \dd s \nonumber \\
 & \quad + \int_0^t\int_0^1 \sigma(v(\kappa_N^T(s),\kappa_M(y))) G_{t-\kappa_N^T(s)}^M (\phi,y)\, W(\dd s,\dd y).
\end{align*}
Let $(\phi_\epsilon)_{\epsilon\geq 0}$ be an approximation of the Dirac delta $\delta_x$, for some
$x\in (0,1)$ (e.g. $\phi_\epsilon$ could be taken to be Gaussian kernels), so that we have
\begin{align*}
 \int_0^1 v(t,\kappa_M(y)) G_{t-\kappa_N^T(t)}^M (\phi_\epsilon,y)\,\dd y & =
 \int_0^1 u_0(\kappa_M(y)) G_t^M(\phi_\epsilon,y)\,\dd y \\
 & \quad + \int_0^t\int_0^1 f(v(\kappa_N^T(s),\kappa_M(y))) G_{t-\kappa_N^T(s)}^M (\phi_\epsilon,y)\, \dd y\, \dd s \nonumber \\
 & \quad + \int_0^t\int_0^1 \sigma(v(\kappa_N^T(s),\kappa_M(y))) G_{t-\kappa_N^T(s)}^M (\phi_\epsilon,y)\,
 W(\dd s,\dd y).
\end{align*}
Then, as it is done in the proof of \cite[Thm.~3.2]{walsh1}, take $\epsilon\rightarrow 0$ in the latter
equation, so we will end up with
\begin{align}
 & \int_0^1 G^M(t-\kappa_N^T(t),x,y) v(t,\kappa_M(y))\,   \dd y \nonumber \\
& \quad = \int_0^1 G^M(t,x,y) u_0(\kappa_M(y))\, \dd y \nonumber \\
 & \quad \quad + \int_0^t\int_0^1 G^M(t-\kappa_N^T(s),x,y) f(v(\kappa_N^T(s),\kappa_M(y)))\,  \dd y\, \dd s \nonumber \\
 & \quad \quad+ \int_0^t\int_0^1 G^M(t-\kappa_N^T(s),x,y) \sigma(v(\kappa_N^T(s),\kappa_M(y)))\,
 W(\dd s,\dd y).
 \label{eq:7}
\end{align}
Note that this equation, which is valid for any $(t,x)\in [0,T]\times [0,1]$, is very similar to
the one we would like to get, that is \eqref{eq:4}. In fact, taking $t=t_n$ and $x=x_m$ in
\eqref{eq:7} for some
$n\in \{0,\dots,N\}$ and $m\in \{1,\dots,M-1\}$, respectively, we have, using the explicit expression
of $G^M$,
\begin{align*}
 \int_0^1 G^M(0,x_m,y) v(t_n,\kappa_M(y))\, \dd y & =
 \int_0^1 \left(\sum_{j=1}^{M-1} \varphi_j(x_m) \varphi_j(\kappa_M(y))\right) v(t_n,\kappa_M(y))\, \dd y\\
 & = \sum_{j=1}^{M-1} \varphi_j(x_m) \int_0^1 \varphi_j(\kappa_M(y))  v(t_n,\kappa_M(y))\, \dd y \\
 & = \sum_{k=1}^{M-1} v(t_n,x_k) \frac1M \sum_{j=1}^{M-1} \varphi_j(x_m) \varphi_j(x_k)\\
 & = v(t_n,x_m),
\end{align*}
where in the last step we have applied \eqref{eq:8}. This concludes the lemma's proof.
\end{proof}

\medskip

As a consequence of Lemma \ref{lem:1}, comparing equations \eqref{eq:3} and \eqref{eq:4} we deduce that
$u^{M,N}(t_n,x_m)=v(t_n,x_m)$ for all $m=1,\dots,M-1$ and
 $n=0,1,\dots,N$. Thus, we can define a continuous version of $u^{M,N}$ as follows: for any
 $(t,x)\in [0,T]\times [0,1]$, set
 \[
  u^{M,N}(t,x):= \int_0^1 G^M(t-\kappa_N^T(t),x,y) v(t,\kappa_M(y))\,   \dd y.
 \]
Observe that, by \eqref{eq:7}, the random field $\{u^{M,N}(t,x),\, (t,x)\in [0,T]\times [0,1]\}$ satisfies
\begin{equation}
\begin{aligned}
u^{M,N}(t,x)&:=\int_0^1G^M(t,x,y)u_0(\kappa_M(y))\,\dd y\\
&\quad+\int_0^t\int_0^1 G^M(t-\kappa_N^T(s),x,y)f(u^{M,N}(\kappa_N^T(s),\kappa_M(y)))\,\dd y\,\dd s\\
&\quad+\int_0^t\int_0^1 G^M(t-\kappa_N^T(s),x,y)\sigma(u^{M,N}(\kappa_N^T(s),\kappa_M(y)))\,W(\dd s,\dd y).
\end{aligned}
\label{fullapp}
\end{equation}
The above mild form of the fully discrete approximation will be used in the proof of the main result
of the paper (see Theorem \ref{th:time}).

\medskip

\begin{remark}
 It can be easily proved that, if $t_n$ is any discrete time and $x\in (x_m,x_{m+1})$, then
 $u^{M,N}(t_n,x)$ turns out to be the linear interpolation between $u^{M,N}(t_n,x_m)$ and
 $u^{M,N}(t_n,x_{m+1})$. This is consistent with the definition of the space discrete
 approximation $u^M(t,x)$ whenever $x\in (x_m,x_{m+1})$ (see \eqref{eq:-1}).
\end{remark}


\subsubsection{Some properties of ${u^{M,N}}$}

This section is devoted to provide three results establishing
properties of the full approximation $u^{M,N}$ which will be needed
in the sequel.

\medskip

First, we note that the full approximation \eqref{fullapp} is bounded.
Indeed, the proof of the following proposition is very similar to that of Proposition~\ref{xreg} above
and is therefore omitted.

\smallskip

\begin{proposition}\label{fullappbdd}
Assume that $u_0\in C([0,1])$ with $u_0(0)=u_0(1)=0,$ and that the functions $f$ and $\sigma$ satisfy
the condition \eqref{LG}. Then, for every $p\geq 1$, there exists a constant $C$ such that
$$
\sup_{M,N\geq1}\sup_{(t,x)\in[0,T]\times[0,1]}\E[|u^{M,N}(t,x)|^{2p}]\leq C.
$$
\end{proposition}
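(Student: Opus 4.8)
The plan is to mimic the proof of Proposition~\ref{xreg} (Proposition~3.5 in \cite{gyongy1}) adapted to the fully discrete setting, working from the mild form \eqref{fullapp}. Fix $p\geq 1$ and write, for $t\in[0,T]$ and $x\in[0,1]$,
\[
\E[|u^{M,N}(t,x)|^{2p}]\leq C\bigl(I_0+I_F+I_\Sigma\bigr),
\]
where $I_0$ is the deterministic term involving $u_0$, $I_F$ the drift term, and $I_\Sigma$ the stochastic integral term. For $I_0$, I would use the boundedness of $u_0$ on $[0,1]$ together with the estimate $\int_0^1|G^M(t,x,y)|\,\dd y\leq C$ (which follows from part~(ii) of Lemma~\ref{greenreg} and Cauchy--Schwarz, or directly as in \cite{gyongy1}), so $I_0\leq C\norm{u_0}_\infty^{2p}$, uniformly in $M,N,t,x$.

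For $I_F$, apply H\"older's inequality in the space-time variable against the finite measure $G^M(t-\kappa_N^T(s),x,y)\,\dd y\,\dd s$ (whose total mass is bounded uniformly, again by part~(ii) of Lemma~\ref{greenreg} since $\int_0^t (t-\kappa_N^T(s))^{-1/2}\,\dd s$ is bounded — here one must be a little careful that $t-\kappa_N^T(s)$ can vanish only on the last subinterval, but the singularity $r^{-1/2}$ is integrable), then use the linear growth bound \eqref{LG} to get $|f(u^{M,N}(\kappa_N^T(s),\kappa_M(y)))|^{2p}\leq C(1+|u^{M,N}(\kappa_N^T(s),\kappa_M(y))|^{2p})$. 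For $I_\Sigma$, apply the Burkholder--Davis--Gundy inequality to pass to the quadratic variation $\int_0^t\int_0^1 |G^M(t-\kappa_N^T(s),x,y)|^2\sigma(\dots)^2\,\dd y\,\dd s$, then H\"older again against the measure $|G^M(t-\kappa_N^T(s),x,y)|^2\,\dd y\,\dd s$, whose total mass is bounded by part~(ii) of Lemma~\ref{greenreg} (i.e. $\int_0^t(t-\kappa_N^T(s))^{-1/2}\,\dd s\leq C\sqrt{T}$), and use \eqref{LG} on $\sigma$. Collecting these, and noting that $\kappa_M$ maps grid-interval endpoints to grid points so that $u^{M,N}(\kappa_N^T(s),\kappa_M(y))$ is just the value of the interpolated field at a grid point, one arrives at
\[
\E[|u^{M,N}(t,x)|^{2p}]\leq C + C\int_0^t \psi_{M,N}(s)\bigl(1+\sup_{z\in[0,1]}\E[|u^{M,N}(\kappa_N^T(s),z)|^{2p}]\bigr)\,\dd s,
\]
with $\psi_{M,N}(s)$ an integrable kernel (essentially $(t-\kappa_N^T(s))^{-1/2}$) whose $L^1$-norm is bounded uniformly in $M,N,t$.

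Setting $\phi_{M,N}(t):=\sup_{x\in[0,1]}\E[|u^{M,N}(t,x)|^{2p}]$ and bounding $\phi_{M,N}(\kappa_N^T(s))\leq \sup_{r\leq s}\phi_{M,N}(r)$, one obtains an integral inequality of Gronwall type with a weakly singular kernel; since $r^{-1/2}$ is integrable, the standard singular-kernel Gronwall lemma (or iteration of the inequality, as in \cite{Walsh,gyongy1}) yields $\sup_{t\in[0,T]}\phi_{M,N}(t)\leq C$ with $C$ independent of $M$ and $N$. The main technical point — and the reason the proof is only ``very similar'' rather than identical to that of Proposition~\ref{xreg} — is handling the piecewise-constant time argument $\kappa_N^T(s)$: one must check that freezing the integrand's time variable does not spoil the kernel bounds (it does not, since the kernel $G^M(t-\kappa_N^T(s),x,y)$ still satisfies $\int_0^1|G^M(t-\kappa_N^T(s),x,y)|^2\,\dd y\leq C(t-\kappa_N^T(s))^{-1/2}$ by Lemma~\ref{greenreg}(ii), and $t-\kappa_N^T(s)\geq t-s\geq 0$), and that the Gronwall argument still closes when the unknown appears evaluated at $\kappa_N^T(s)\leq s$ rather than at $s$. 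Neither causes real difficulty, which is why the authors omit the details.
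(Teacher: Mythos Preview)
Your proposal is correct and follows exactly the approach the paper indicates (the paper omits the proof, declaring it ``very similar'' to that of Proposition~\ref{xreg}). One small caveat: the uniform bound $\int_0^1|G^M(t,x,y)|\,\dd y\leq C$ does \emph{not} follow from Lemma~\ref{greenreg}(ii) via Cauchy--Schwarz, since that route gives only $C t^{-1/4}$, which blows up as $t\to 0$; you need the direct argument you also cite from \cite{gyongy1} (contractivity of the discrete heat semigroup on $L^\infty$, equivalently positivity of the kernel).
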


Next, we define the following quantities:
$$
w^{M,N}(t,x):=u^{M,N}(t,x)-\int_0^1 G^M(t,x,y)u_0(\kappa_M(y))\,\dd y
$$
and
$$
w^M(t,x):=u^{M}(t,x)-\int_0^1 G^M(t,x,y)u_0(\kappa_M(y))\,\dd y,
$$
where we recall that $u^M$ stands for the spatial discretization introduced in Section \ref{sect:spatial}.
Then, we have the following result.

\smallskip

\begin{proposition}\label{holder}
Assume that $u_0\in C([0,1])$ with $u_0(0)=u_0(1)=0$, and that $f$ and $\sigma$ satisfy condition \eqref{LG}.
Then, for every $p\geq 1$, $t,r\in[0,T]$ and $x,z\in[0,1]$, we have
\begin{align}
&\E[|w^{M}(t,x)-w^{M}(r,z)|^{2p}]\leq C\left(|t-r|^{1/4}+|x-z|^{1/2}\right)^{2p}\label{holderM}\\
&\E[|w^{M,N}(t,x)-w^{M,N}(r,z)|^{2p}]\leq C\left(|t-r|^{1/4}+|x-z|^{1/2}\right)^{2p},\label{holderMN}
\end{align}
where the constant $C$ does not depend on $M$ neither on $N$.
\end{proposition}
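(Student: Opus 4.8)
The plan is to estimate the Hölder increments of $w^M$ and $w^{M,N}$ separately in the time and space variables, relying on the mild representations
\begin{align*}
w^M(t,x)&=\int_0^t\int_0^1 G^M(t-s,x,y)f(u^M(s,\kappa_M(y)))\,\dd y\,\dd s\\
&\quad+\int_0^t\int_0^1 G^M(t-s,x,y)\sigma(u^M(s,\kappa_M(y)))\,W(\dd s,\dd y)
\end{align*}
and the analogous one for $w^{M,N}$ coming from \eqref{fullapp}, in which the integrands are frozen at $\kappa_N^T(s)$ and the kernel becomes $G^M(t-\kappa_N^T(s),x,y)$. Both $u^M$ and $u^{M,N}$ have moments bounded uniformly in $M,N$ by Proposition~\ref{xreg} and Proposition~\ref{fullappbdd}, and by \eqref{LG} the same holds for $f$ and $\sigma$ evaluated along these processes; this is what lets the constant $C$ be independent of $M$ and $N$. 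I would first treat the spatial increment $w^\bullet(t,x)-w^\bullet(t,z)$ and then the temporal increment $w^\bullet(t,x)-w^\bullet(r,x)$, and finally combine via the triangle inequality and the elementary bound $(a+b)^{2p}\le 2^{2p-1}(a^{2p}+b^{2p})$.

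For each increment I would split into a drift part and a stochastic part. On the stochastic part I apply the Burkholder--Davis--Gundy inequality in the Walsh framework, reducing moments of order $2p$ to the $L^2$-norm in $(s,y)$ of the kernel difference, raised to the power $p$; on the drift part I use Hölder's inequality in $(s,y)$ similarly. The whole analysis thus comes down to four kernel estimates: $\int_0^t\int_0^1|G^M(t-s,x,y)-G^M(t-s,z,y)|^2\,\dd y\,\dd s\le C|x-z|$, the temporal analogue $\int_0^s\int_0^1|G^M(t-r,x,y)-G^M(s-r,x,y)|^2\,\dd y\,\dd r\le C(t-s)^{1/2}$ together with $\int_s^t\int_0^1|G^M(t-r,x,y)|^2\,\dd y\,\dd r\le C(t-s)^{1/2}$, and for the fully discrete case the extra terms created by replacing $s$ with $\kappa_N^T(s)$ inside $G^M$. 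The temporal kernel bounds are exactly Lemma~\ref{greenreg}(i) and~(ii) (the latter integrated in time gives the $(t-s)^{1/2}$), so those are essentially free; the spatial one requires a short computation in the Fourier basis $\{\varphi_j^M\}$ using $|\varphi_j^M(x)-\varphi_j^M(z)|\le C(j|x-z|\wedge 1)$ and $\int_0^\infty e^{-2j^2\pi^2 c_j^M r}\,\dd r\le C j^{-2}$, followed by the same comparison-of-sums-with-integrals trick ($N=[|x-z|^{-1}]$) used in the proof of Lemma~\ref{greenreg}, which yields the rate $|x-z|$ hence the exponent $1/2$ after taking square roots.

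The only genuinely new point is the fully discrete temporal increment. Writing $w^{M,N}(t,x)-w^{M,N}(r,x)$ with $r<t$, I would further decompose the difference of kernels $G^M(t-\kappa_N^T(s),x,y)-G^M(r-\kappa_N^T(s),x,y)$ into (a) the ``clean'' difference $G^M(t-\kappa_N^T(s),x,y)-G^M(r-\kappa_N^T(s),x,y)$ handled as above via Lemma~\ref{greenreg}(i)/(ii), and absorb the fact that the integrand is piecewise constant in $s$ by noting it is still bounded in $L^{2p}(\Omega)$ uniformly; the discretization of $s\mapsto\kappa_N^T(s)$ inside $G^M$ does not worsen the estimate because $\kappa_N^T(s)\le s$ and $s-\kappa_N^T(s)\le\Delta t$, and one can reduce to the continuous-kernel estimates after a change of variables. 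I expect the main obstacle to be bookkeeping: carefully matching up the time arguments $t-\kappa_N^T(s)$ versus $t-s$ so that Lemma~\ref{greenreg}(i) applies, and making sure the split of the integral over $[0,t]=[0,r]\cup[r,t]$ is done so that on $[r,t]$ only the single kernel $G^M(t-\kappa_N^T(s),x,y)$ appears (the other term being zero there). Once the four kernel estimates are in place, the $L^{2p}$ bounds follow by BDG, Hölder, and the moment bounds, and nothing depends on $M$ or $N$; a final remark is that $1/4$ (not $1/2$) is the temporal exponent because the $L^2$-in-time-and-space norm of the kernel difference decays only like $(t-r)^{1/2}$, whose square root is $(t-r)^{1/4}$.
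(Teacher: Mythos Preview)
Your proposal is correct and follows essentially the same route as the paper: split into drift and stochastic parts, handle temporal and spatial increments separately via Burkholder--Davis--Gundy and Minkowski, reduce to the kernel estimates of Lemma~\ref{greenreg}, and use the monotonicity $\kappa_N^T(s)\le s$ to absorb the time discretization in the fully discrete case. The only cosmetic differences are that the paper quotes \cite[Prop.~3.7]{gyongy1} directly for \eqref{holderM} and refers to \cite[Lem.~3.6]{gyongy1} for the spatial kernel estimate rather than redoing the Fourier computation you sketch.
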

\begin{proof}
Inequality \eqref{holderM} is proved in \cite[Prop.~3.7]{gyongy1}.
Let us now show inequality \eqref{holderMN}. By definition, we have
\begin{align*}
w^{M,N}(t,x)&=\int_0^t\int_0^1 G^M(t-\kappa_N^T(s),x,y)f(u^{M,N}(\kappa_N^T(s),\kappa_M(y)))\,\dd y\,\dd s\\
&\quad+\int_0^t\int_0^1 G^M(t-\kappa_N^T(s),x,y)\sigma(u^{M,N}(\kappa_N^T(s),\kappa_M(y)))\,W(\dd s,\dd y)\\
&=:F^{M,N}(t,x)+H^{M,N}(t,x),
\end{align*}
and hence
$$
w^{M,N}(t,x)-w^{M,N}(r,z)=F^{M,N}(t,x)-F^{M,N}(r,z)+H^{M,N}(t,x)-H^{M,N}(r,z).
$$
Therefore
\begin{align*}
\E[|w^{M,N}(t,x)-w^{M,N}(r,z)|^{2p}]&\leq C\bigl(\E[|F^{M,N}(t,x)-F^{M,N}(r,z)|^{2p}]\\
&\quad+\E[|H^{M,N}(t,x)-H^{M,N}(r,z)|^{2p}]\bigr).
\end{align*}
We will next prove that
$$
\E[|H^{M,N}(t,x)-H^{M,N}(r,z)|^{2p}]\leq C\left(|t-r|^{1/4}+|x-z|^{1/2}\right)^{2p}.
$$
The estimate for $F^{M,N}$ follows in a similar way.
We have
\begin{align*}
|H^{M,N}(t,x)-H^{M,N}(r,z)|^{2p}&\leq C\big(|H^{M,N}(t,x)-H^{M,N}(r,x)|^{2p}\\
&\quad+|H^{M,N}(r,x)-H^{M,N}(r,z)|^{2p}\bigr)
\end{align*}
and define
\begin{align*}
A^{2p}&:=\E[|H^{M,N}(t,x)-H^{M,N}(r,x)|^{2p}]\\
B^{2p}&:=\E[|H^{M,N}(r,x)-H^{M,N}(r,z)|^{2p}].
\end{align*}
Then $A^{2p}\leq C(A_1^{2p}+A_2^{2p})$, where, for $r\leq t$ without loss of generality,
\begin{align*}
A_1^{2p}&= \E\left[\left|\int_0^r\int_0^1(G^M(t-\kappa_N^T(s),x,y)-G^M(r-\kappa_N^T(s),x,y))\right.\right.\\
&\quad\times\left.\left.\sigma(u^{M,N}(\kappa_N^T(s),\kappa_M(y)))\, W(\dd s,\dd y)\right|^{2p}\right]\\
A_2^{2p}&= \E\left[\left|\int_r^t\int_0^1G^M(t-\kappa_N^T(s),x,y)\sigma(u^{M,N}(\kappa_N^T(s),\kappa_M(y)))\, W(\dd s,\dd y)\right|^{2p}\right].
\end{align*}
Using Burkholder-Davies-Gundy's inequality, Lemma \ref{greenreg}, assumption \eqref{LG} on $\sigma$,
Minkowski's inequality
and Proposition \ref{fullappbdd}, we have the estimates
\begin{align*}
A_1^2&=\left(\E\left[\left|\int_0^r\int_0^1(G^M(t-\kappa_N^T(s),x,y)-G^M(r-\kappa_N^T(s),x,y))\sigma(u^{M,N}(\kappa_N^T(s),\kappa_M(y))\, W(\dd s,\dd y)\right|^{2p}\right]\right)^{1/p}\\
&\leq C\left(\E\left[\left(\int_0^r\int_0^1|G^M(t-\kappa_N^T(s),x,y)-G^M(r-\kappa_N^T(s),x,y)|^2|\sigma(u^{M,N}(\kappa_N^T(s),\kappa_M(y)))|^2\, \dd y\,\dd s\right)^p\right]\right)^{1/p}\\
&=C\tnorm{\int_0^r\int_0^1|G^M(t-\kappa_N^T(s),x,y)-G^M(r-\kappa_N^T(s),x,y)|^2|\sigma(u^{M,N}(\kappa_N^T(s),\kappa_M(y)))|^2\,\dd y\,\dd s}_p\\
&\leq C\int_0^r\int_0^1|G^M(t-\kappa_N^T(s),x,y)-G^M(r-\kappa_N^T(s),x,y)|^2\tnorm{\sigma(u^{M,N}(\kappa_N^T(s),\kappa_M(y)))}_{2p}^2\,\dd y\,\dd s\\
&\leq C\int_0^r\int_0^1|G^M(t-\kappa_N^T(s),x,y)-G^M(r-\kappa_N^T(s),x,y)|^2\,\dd y\,\dd s\\
&\leq C(t-r)^{1/2},
\end{align*}
where we set $\tnorm{\cdot}_{2p}=\left(\E\left[|\cdot|^{2p}\right]\right)^{1/(2p)}$.
Using similar arguments we have
\begin{align*}
A_2^2&=\left(\E\left[\left|\int_r^t\int_0^1G^M(t-\kappa_N^T(s),x,y)\sigma(u^{M,N}(\kappa_N^T(s),\kappa_M(y))\, W(\dd s,\dd y)\right|^{2p}\right]\right)^{1/p}\\
&\leq C\left(\E\left[\left(\int_r^t\int_0^1|G^M(t-\kappa_N^T(s),x,y)|^2|\sigma(u^{M,N}(\kappa_N^T(s),\kappa_M(y)))|^2\, \dd y\,\dd s\right)^p\right]\right)^{1/p}\\
&\leq C\int_r^t\int_0^1|G^M(t-\kappa_N^T(s),x,y)|^2\tnorm{\sigma(u^{M,N}(\kappa_N^T(s),\kappa_M(y)))}_{2p}^2\,\dd y\,\dd s\\
&\leq C\int_r^t \frac{1}{(t-\kappa_N^T(s))^{1/2}}\,\dd s\\
&\leq C\int_r^t \frac{1}{(t-s)^{1/2}}\,\dd s\\
&\leq C(t-r)^{1/2}.
\end{align*}
Thus, we obtain
$$\E[|H^{M,N}(t,x)-H^{M,N}(r,x)|^{2p}]\leq C|t-r|^{p/2},$$
and we remark that this estimate is uniform with respect to $x\in [0,1]$.

\smallskip

It remains to estimate the term $B$. We have
\begin{align*}
B^{2p}:=\E\left[\left|\int_0^r\int_0^1(G^M(r-\kappa_N^T(s),x,y)-G^M(r-\kappa_N^T(s),z,y))\sigma(u^{M,N}(\kappa_N^T(s),\kappa_M(y)))\, W(\dd s,\dd y)\right|^{2p}\right],
\end{align*}
and estimating $B$ as we did for $A_1$ and $A_2$, we obtain
\begin{align*}
B^2&\leq C\int_0^r\int_0^1|G^M(r-\kappa_N^T(s),x,y)-G^M(r-\kappa_N^T(s),z,y)|^2\,\dd y\,\dd s\\
&\leq C\int_0^r\sum_{j=1}^{M-1} \exp(-2j^2\pi^2c_j^M(r-\kappa_N^T(s)))|\varphi_j^M(x)-\varphi_j^M(z)|^2\,\dd s\\
&\leq C\int_0^r\sum_{j=1}^{M-1} \exp(-2j^2\pi^2c_j^M(r-s))|\varphi_j^M(x)-\varphi_j^M(z)|^2\,\dd s.
\end{align*}
At this point, we note that the latter term also appears in the
proof of \cite[Lem.~3.6]{gyongy1}, so we can estimate it in the same
way and obtain
$$
\E[|H^{M,N}(r,x)-H^{M,N}(r,z)|^{2p}]\leq C|x-z|^{p},
$$
with a constant $C$ independent of $r$.
Collecting the estimates obtained so far we obtain the bound
\[
\E[|H^{M,N}(t,x)-H^{M,N}(r,z)|^{2p}]\leq C\left(|t-r|^{1/4}+|x-z|^{1/2}\right)^{2p},
\]
which finally leads to \eqref{holderMN}.
\end{proof}

\medskip

Finally, we shall also need the following regularity result
for the full approximation.

\smallskip

\begin{proposition}\label{regMN}
Assume that $f$ and $\sigma$ satisfy condition \eqref{LG}.
\begin{enumerate}
 \item If $u_0\in C([0,1])$ with $u_0(0)=u_0(1)=0$, then for any $s, t\in [0,T]$ and $x\in [0,1]$,
$p\geq 1$ and $\frac12<\alpha<\frac52$, we have
$$
\E[|u^{M,N}(t,x)-u^{M,N}(s,x)|^{2p}]\leq C s^{-\alpha p} |t-s|^{\tau p},
$$
 where $\tau = \frac12 \wedge(\alpha-\frac12)$ and with a constant $C$ independent of $M$, $N$ and $x$.
 \item If $u_0\in H^{\beta}([0,1])$, with $u_0(0)=u_0(1)=0$, for some
$\beta>\frac{1}{2}$, then for any $s, t\in [0,T]$ and $x,z\in [0,1]$,
and any $p\geq 1$, we have
$$
\E[|u^{M,N}(t,x)-u^{M,N}(s,z)|^{2p}]\leq C\bigl(|t-s|^{\tau
p}+|x-z|^{2 \tau p}\bigr),
$$
where $\tau = \frac12 \wedge(\beta-\frac12)$ and with a constant $C$ independent of $M$ and $N$.
\end{enumerate}
\end{proposition}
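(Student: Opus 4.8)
The plan is to decompose $u^{M,N}(t,x)-u^{M,N}(s,z)$ exactly as in the proof of Proposition~\ref{reg}, writing
\[
u^{M,N}(t,x)-u^{M,N}(s,z) = I_0 + \bigl(w^{M,N}(t,x)-w^{M,N}(s,z)\bigr),
\]
where $I_0 := \int_0^1 \bigl(G^M(t,x,y)-G^M(s,z,y)\bigr)u_0(\kappa_M(y))\,\dd y$ is the deterministic initial-data contribution and $w^{M,N}$ is the quantity already introduced before Proposition~\ref{holder}. For part~(1) the $w^{M,N}$-increment is handled directly by Proposition~\ref{holder} (taking $z=x$ there is enough for the stated estimate, which is only in the time variable), which contributes $C|t-s|^{p/2}$, certainly dominated by $Cs^{-\alpha p}|t-s|^{\tau p}$ since $\tau\le\frac12$ and $s^{-\alpha p}\ge T^{-\alpha p}$. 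So the work in part~(1) is to bound $\E[|I_0|^{2p}]$ when $u_0\in C([0,1])$; since $I_0$ is deterministic, $\E[|I_0|^{2p}] = |I_0|^{2p}$, and by Cauchy--Schwarz $|I_0|^2 \le C\int_0^1 |G^M(t,x,y)-G^M(s,x,y)|^2\,\dd y$ (after first splitting the $x\ne z$ case, which for part~(1) is unnecessary since only $x=z$ is claimed there), and then Lemma~\ref{greenreg}(iii) gives $Cs^{-\alpha}(t-s)^{\alpha-\frac12}$; raising to the $p$-th power and combining with the $w$-term yields exactly $Cs^{-\alpha p}|t-s|^{\tau p}$ with $\tau=\frac12\wedge(\alpha-\frac12)$. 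This is essentially a transcription of the first half of the proof of Proposition~\ref{reg}, with $u^M$ replaced by $u^{M,N}$ and Lemma~\ref{greenreg} used in place of the $u^M$-specific estimates.

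For part~(2), with $u_0\in H^\beta$, $\beta>\frac12$, I would again split
\[
\E[|u^{M,N}(t,x)-u^{M,N}(s,z)|^{2p}] \le C\bigl(|I_0|^{2p} + \E[|w^{M,N}(t,x)-w^{M,N}(s,z)|^{2p}]\bigr).
\]
The second term is controlled by Proposition~\ref{holder}: $\E[|w^{M,N}(t,x)-w^{M,N}(s,z)|^{2p}]\le C(|t-s|^{1/4}+|x-z|^{1/2})^{2p}\le C(|t-s|^{p/2}+|x-z|^p)$, and since $\tau\le\frac12$ this is at most $C(|t-s|^{\tau p}+|x-z|^{2\tau p})$ on the bounded domain. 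For $|I_0|^{2p}$ I would further split $I_0 = \int_0^1(G^M(t,x,y)-G^M(s,x,y))u_0(\kappa_M(y))\,\dd y + \int_0^1(G^M(s,x,y)-G^M(s,z,y))u_0(\kappa_M(y))\,\dd y$. The first piece is exactly the term estimated in the second half of the proof of Proposition~\ref{reg} via the eigenfunction expansion of $G^M$, $\langle u_0(\kappa_M(\cdot)),\varphi_j(\kappa_M(\cdot))\rangle = \langle u_0,\varphi_j\rangle$, Cauchy--Schwarz in $j$ against the weights $j^{\pm2\beta}$, the bound $1-\e^{-x}\le x$, and the sum-versus-integral comparison with $N=[1/\sqrt{t-s}]$; this gives $C|t-s|^{\gamma p}$ with $\gamma=2\wedge(\beta-\frac12)$, hence $\le C|t-s|^{\tau p}$. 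The second (spatial) piece I would treat by the same eigenfunction expansion: $\int_0^1(G^M(s,x,y)-G^M(s,z,y))u_0(\kappa_M(y))\,\dd y = \sum_j \e^{\lambda_j^M s}\langle u_0,\varphi_j\rangle(\varphi_j^M(x)-\varphi_j^M(z))$, and then bound $|\varphi_j^M(x)-\varphi_j^M(z)|\le Cj^\theta|x-z|^{\theta}$ for $\theta\in[0,1]$ (using Lipschitzness of $\varphi_j$ with constant $\sim j$ together with boundedness), apply Cauchy--Schwarz against $j^{\pm2\beta}$, and reduce to $\sum_j j^{-2\beta}(j^{2}|x-z|^{2}\wedge 1)$ — the same type of sum as before but with exponent $2$ in place of $4$ — which a comparison with integrals bounds by $C|x-z|^{(2\beta-1)\wedge 2\cdot?}$; tuning $\theta$ one obtains $C|x-z|^{2\tau p}$ after taking $p$-th powers, matching the claim since a $\delta$-Hölder rate with $\delta<\beta-\frac12$ corresponds to exponent $2\tau$ in the squared kernel.

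The main obstacle I anticipate is the spatial increment of the initial-data term in part~(2): unlike the time increment, there is no ready-made lemma for $\int_0^1(G^M(s,x,y)-G^M(s,z,y))u_0(\kappa_M(y))\,\dd y$, so one must carry out the eigenfunction-expansion estimate by hand, and in particular obtain the correct Hölder exponent $2\tau = 1\wedge(2\beta-1)$ from the interpolation $|\varphi_j^M(x)-\varphi_j^M(z)|\le C(j|x-z|)^\theta$ — choosing $\theta$ so that $\sum_j j^{-2\beta}(j|x-z|)^{2\theta}$ converges and scales like $|x-z|^{2\tau}$ requires the same split-at-$N$ argument used repeatedly above, and one has to be a little careful that the bound is uniform in $M$ (which it is, since the $\varphi_j^M$ are piecewise-linear interpolants of $\varphi_j$ and the relevant Lipschitz/boundedness constants are $M$-independent) and, because no negative power of $s$ appears, that the estimate also holds at $s=0$. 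Everything else is a routine adaptation of Propositions~\ref{reg} and~\ref{holder} and Lemma~\ref{greenreg}, so I would present the proof mostly by citing those and indicating the one genuinely new computation.
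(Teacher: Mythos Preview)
Your proposal is correct and follows essentially the same approach as the paper. The only organizational difference is that you invoke Proposition~\ref{holder} directly to control the $w^{M,N}$-increments in both parts, whereas the paper re-derives those bounds from scratch by listing the three kernel estimates (the analogues of Lemma~\ref{greenreg}(i)--(iii) with $\kappa_N^T$ inserted) and, for the spatial increments in part~(2), by reducing to $\int_0^t\sum_j e^{2\lambda_j^M(t-s)}(1\wedge j^2(z-x)^2)\,\dd s\le C(z-x)$; your shortcut via Proposition~\ref{holder} is legitimate since that result is already available and yields the same exponents. For the spatial increment of the initial-data term in part~(2), the paper does exactly what you anticipate as the ``main obstacle'': it uses the eigenfunction expansion, the bound $|\varphi_j^M(x)-\varphi_j^M(z)|\le C(1\wedge j|x-z|)$ (equivalent to your interpolation), and the split-at-$N$ sum $\sum_j j^{-2\beta}(1\wedge j^2(z-x)^2)\le C(z-x)^{2\beta-1}$, which on the bounded domain gives the required $|x-z|^{2\tau}$.
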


\begin{proof}
The proof can be built on the proof of Proposition \ref{reg}, so we will only sketch the main steps.

To start with, part 1 can be proved by following the same arguments used in the proof of part 1 of Proposition \ref{reg} and it is based on three estimates.
First, one applies that
\[
 \int_0^1 |G^M(t,x,y)-G^M(s,x,y)|^2\, \dd y \leq C s^{-\alpha} |t-s|^{\alpha-\frac12},
\]
which corresponds to part $(iii)$ in Lemma \ref{greenreg}. Secondly, we have
\[
 \int_s^t \int_0^1 |G^M(t-\kappa_N^T(r),x,y)|^2\, \dd y\, \dd r\leq C |t-s|^\frac12,
\]
which can be verified by using $(ii)$ of Lemma \ref{greenreg}. Finally, it holds that
\[
 \int_0^s \int_0^1 |G^M(t-\kappa_N^T(r),x,y)- G^M(s-\kappa_N^T(r),x,y)|^2\, \dd y\, \dd r \leq
 C |t-s|^\frac12.
\]
The latter estimate can be checked by doing some simple modifications in the proof of part $(i)$ in Lemma \ref{greenreg}.

As far as part 2 is concerned, the time increments can be analyzed following the same steps as
those used in the proof of part 2 in Proposition \ref{reg}. We will
sketch the proof for the spatial increments. More precisely, taking
into account equation \eqref{fullapp}, in order to control the term
$\E[|u^{M,N}(t,x)-u^{M,N}(t,z)|^{2p}]$ first we need to estimate the
expression
\[
\left|\int_0^1(G^M(t,x,y)-G^M(t,z,y))u_0(\kappa_M(y))\,\dd y\right|^{2p}.
\]
Using the same techniques as in the proof of part 2 in Proposition \ref{reg}, the above term
can be bounded by
\[
 \|u_0\|^{2p}_{H^\beta} \left|\sum_{j=1}^{M-1} j^{-2\beta} \big|\varphi_j^M(x)- \varphi_j^M(z)\big|^2 \right|^p, 
\]
where we recall that $\beta>\frac12$.
Next, it can be easily proved that $\big|\varphi_j^M(x)-
\varphi_j^M(z)\big|\leq C (1 \wedge j (z-x))$, where the constant
$C$ does not depend on $M$ and we have assumed, without loosing
generality, that $x<z$. Hence,
\[
 \left|\int_0^1(G^M(t,x,y)-G^M(t,z,y))u_0(\kappa_M(y))\,\dd y\right|^{2p} \leq C
 \left(\sum_{j=1}^\infty j^{-2\beta} (1 \wedge j^2 (z-x)^2)\right)^p.
\]
The latter series can be estimated, up to some constant, by $(z-x)^{(2\beta-1)p}$.

As far as the spatial increments of the remaining two terms in
equation \eqref{fullapp} is concerned, applying Burkholder-Davies-Gundy and
Minkowski's inequalities, as well as the linear growth on $f$ and
$\sigma$ and Proposition~\ref{fullappbdd}, the analysis reduces to
control the term
\[
 \left(\int_0^t\int_0^1 |G^M(t-\kappa_N^T(s),x,y)-G^M(t-\kappa_N^T(s),z,y)|^2\, \dd y\, \dd s\right)^p.
\]
The same arguments as above yield that this term can be bounded by
\begin{align*}
\left(\int_0^t \sum_{j=1}^{M-1} e^{2 \lambda_j^M (t-s)} \big(1
\wedge j^2(z-x)^2\big)\, \dd s\right)^p & \leq C \left(
\sum_{j=1}^\infty j^{-2} \big(1 \wedge j^2(z-x)^2\big)\right)^p
\\
& \leq C (z-x)^p.
\end{align*}
This concludes the proof.
\end{proof}

\medskip

\begin{remark}
Whenever $u_0\in H^{\beta}([0,1])$ for some
$\beta>\frac{1}{2}$, the above result implies, thanks to Kolmogorov's continuity
criterion, that the random field $u^{M,N}$ has a version with
H\"older-continuous sample paths.
\end{remark}

\subsubsection{Main result}

We are now ready to formulate and prove the main result of this
section. Recall that $u^M$ is the space discrete approximation given
by \eqref{spaceapp} and $u^{M,N}$ is the full discretization given
by \eqref{fullapp}.

\begin{theorem}\label{th:time}
Assume that $f$ and $\sigma$ satisfy the conditions \eqref{L} and
\eqref{LG}.
\begin{enumerate}
\item If $u_0\in C([0,1])$ with $u_0(0)=u_0(1)=0$, then for any $p\geq
1$, $0<\mu<\frac14$ and $t\in [0,T]$, there exists a constant
$C=C(p,\mu,t)$ such that
$$
\sup_{x\in
[0,1]}\Big(\E[|u^{M,N}(t,x)-u^M(t,x)|^{2p}]\Big)^{\frac{1}{2p}}\leq
C(\Delta t)^\mu.
$$
\item If $u_0\in H^{\beta}([0,1])$ for some $\beta>\frac{1}{2}$,
with $u_0(0)=u_0(1)=0$, then for any $p\geq 1$, we have
$$
\sup_{t\in[0,T]}\sup_{x\in
[0,1]}\Big(\E[|u^{M,N}(t,x)-u^M(t,x)|^{2p}]\Big)^{\frac{1}{2p}}\leq
C(\Delta t)^\nu,
$$
where $\nu=\frac14 \wedge(\frac{\beta}{2}-\frac14)$.
\end{enumerate}
\end{theorem}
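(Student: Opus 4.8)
The plan is to subtract the mild form \eqref{fullapp} of $u^{M,N}$ from the mild form \eqref{spaceapp} of $u^M$ and estimate the resulting three differences in $L^{2p}(\Omega)$, uniformly in $x$ (and, in part 2, uniformly in $t$ as well). Since both processes share the same initial-data term $\int_0^1 G^M(t,x,y)u_0(\kappa_M(y))\,\dd y$, that contribution cancels completely, so we are left with the drift and the stochastic convolution differences. Writing $e(t,x):=u^M(t,x)-u^{M,N}(t,x)$, each of the two remaining terms splits naturally into a piece where the kernels differ (a $G^M(t-s,x,y)$ versus $G^M(t-\kappa_N^T(s),x,y)$ mismatch) and a piece where the integrands differ (an $f$ or $\sigma$ evaluated at $u^M(s,\kappa_M(y))$ versus at $u^{M,N}(\kappa_N^T(s),\kappa_M(y))$ mismatch). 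The ``integrand'' pieces, after Burkholder--Davis--Gundy, Minkowski, and the Lipschitz hypothesis \eqref{L}, produce a term of the form $C\int_0^t (t-s)^{-1/2}\sup_{z}\E[|e(\kappa_N^T(s),z)|^{2p}]^{1/p}\,\dd s$ plus a remainder coming from replacing $u^M(s,\cdot)$ by $u^M(\kappa_N^T(s),\cdot)$, which is controlled by Proposition~\ref{reg}; one then closes the argument with a singular Gr\"onwall lemma (of the type $g(t)\le a(t)+C\int_0^t(t-s)^{-1/2}g(s)\,\dd s$) to absorb the $e$-dependence.

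\textbf{The kernel-mismatch terms.} The core quantitative input is the bound on $\int_0^t\int_0^1 |G^M(t-\kappa_N^T(s),x,y)-G^M(t-s,x,y)|^2\,\dd y\,\dd s$. One splits the time integral over the subintervals $[t_r,t_{r+1}]$ and uses part~$(i)$ of Lemma~\ref{greenreg} on each: since $|s-\kappa_N^T(s)|\le\Delta t$, the increment-in-time estimate $C(\Delta t)^{1/2}$ holds on each block after a suitable renormalization, but naively summing over $N$ blocks would lose the rate, so one must instead argue more carefully near the diagonal $s\approx t$ (where one uses part $(ii)$ to bound the two kernels separately, giving $\int (t-s)^{-1/2}\,\dd s$ over a window of width $\Delta t$, i.e. $(\Delta t)^{1/2}$) and away from the diagonal (where part $(i)$ applies with room to spare). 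This is essentially the computation behind the claimed rate $\nu=\tfrac14-$: the $L^2$-norm-squared of the kernel mismatch is $\bigo{(\Delta t)^{1/2}}$, hence the $L^{2p}(\Omega)$-norm of the corresponding stochastic integral is $\bigo{(\Delta t)^{1/4}}$. For the drift kernel-mismatch term one similarly gets $\bigo{(\Delta t)^{1/2}}$ in $L^{2p}$, which is better, so the stochastic term dominates. In part~1, the presence of the factor $s^{-\alpha p}$ in Propositions~\ref{reg} and~\ref{regMN} forces a $t$-dependent (non-uniform) constant and costs an arbitrarily small $\varepsilon$ in the exponent, yielding $\mu<\tfrac14$ rather than $\mu=\tfrac14$; in part~2, the stronger regularity $u_0\in H^\beta$ removes that singular factor (via the second parts of those propositions), so one obtains the clean exponent $\nu=\tfrac14\wedge(\tfrac{\beta}{2}-\tfrac14)$ uniformly in $t\in[0,T]$ and $x\in[0,1]$.

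\textbf{Assembling the estimate and the Gr\"onwall step.} Concretely, I would set $\phi(t):=\sup_{M,N}\sup_{x\in[0,1]}\E[|u^M(t,x)-u^{M,N}(t,x)|^{2p}]$, combine the kernel-mismatch bounds (which contribute the $a(t)=C(\Delta t)^{2p\nu}$ term), the $u^M$-time-regularity bounds from Proposition~\ref{reg} (which contribute another $C(\Delta t)^{\tau p}$ term, with $\tau=\tfrac12\wedge(\beta-\tfrac12)$ in part~2, matching or beating $2p\nu$ since $\tfrac{\beta}{2}-\tfrac14=\tfrac12(\beta-\tfrac12)$), and the Lipschitz-induced feedback term $C\int_0^t (t-s)^{-1/2}\phi(\kappa_N^T(s))\,\dd s$. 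Since $\phi(\kappa_N^T(s))\le\phi(s)$ is false in general, one instead works with $\psi(t):=\sup_{s\le t}\phi(s)$ or applies the discrete/continuous singular Gr\"onwall inequality directly to $\phi$ evaluated along the grid; either way the kernel $(t-s)^{-1/2}$ is integrable, so the standard iterated-kernel argument (Henry's lemma) gives $\phi(t)\le C a(t)$ for a constant depending on $T$, $p$, and the Lipschitz constant. Taking $2p$-th roots yields the assertion. \textbf{Main obstacle.} The delicate point is the sharp $\bigo{(\Delta t)^{1/2}}$ bound for the kernel mismatch $\int_0^t\int_0^1|G^M(t-\kappa_N^T(s),x,y)-G^M(t-s,x,y)|^2\,\dd y\,\dd s$ uniformly in $M$ and $x$: one must handle the interplay between the time-shift of size $\le\Delta t$ and the integrable-but-singular short-time behavior $\sim (t-s)^{-1/2}$ of $\int_0^1|G^M(\cdot,x,y)|^2\,\dd y$, splitting into a near-diagonal region (handled by the singularity, giving exactly the $(\Delta t)^{1/2}$) and a bulk region (handled by Lemma~\ref{greenreg}$(i)$), while ensuring no logarithmic loss creeps in when summing over the $N$ time blocks.
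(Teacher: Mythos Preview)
Your proposal is correct and follows essentially the same route as the paper: subtract the two mild forms, split each of the drift and stochastic terms into a kernel-mismatch piece and an integrand-mismatch piece, control the kernel piece by an estimate of the type $\int_0^t\int_0^1|G^M(t-\kappa_N^T(s),x,y)-G^M(t-s,x,y)|^2\,\dd y\,\dd s\le C(\Delta t)^{1/2}$, control the integrand piece via Lipschitz, time regularity, and Lemma~\ref{greenreg}$(ii)$, and close with a singular Gr\"onwall inequality.

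Two minor differences are worth noting. First, in the integrand-mismatch term you insert $u^M(\kappa_N^T(s),\cdot)$ and invoke the time regularity of $u^M$ (Proposition~\ref{reg}), leaving the feedback term $e(\kappa_N^T(s),\cdot)$ at the grid time; the paper instead inserts $u^{M,N}(s,\cdot)$ and invokes the time regularity of $u^{M,N}$ (Proposition~\ref{regMN}), so the feedback appears at the continuous time $s$ and the Gr\"onwall step goes through without the extra passage to $\psi(t)=\sup_{s\le t}\phi(s)$. Both splittings are legitimate and give the same rates. Second, you over-engineer the kernel-mismatch bound: the near-diagonal/bulk splitting you describe is unnecessary. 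The paper simply calls it a ``slight variation'' of Lemma~\ref{greenreg}$(i)$, and indeed the same spectral computation works directly: after the orthogonality relation~\eqref{eq:8} one has
\[
\int_0^t \sum_{j=1}^{M-1} e^{2\lambda_j^M(t-s)}\bigl(1-e^{\lambda_j^M(s-\kappa_N^T(s))}\bigr)^2\,\dd s
\le C\sum_{j} \frac{1}{|\lambda_j^M|}\bigl(1\wedge |\lambda_j^M|\Delta t\bigr)^2
\le C\sum_{j} j^{-2}\bigl(1\wedge j^4(\Delta t)^2\bigr),
\]
since $0\le s-\kappa_N^T(s)\le\Delta t$ allows one to bound the parenthesis uniformly in $s$ before integrating $e^{2\lambda_j^M(t-s)}$; the last sum is exactly the one handled in the proof of Lemma~\ref{greenreg}$(i)$ and yields $C(\Delta t)^{1/2}$ with no logarithmic loss and no block-by-block summation.
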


\begin{proof}
We have, using the notation $\tnorm{\cdot}_{2p}=\left(\E\left[|\cdot|^{2p}\right]\right)^{1/(2p)}$,
\begin{align*}
&\tnorm{u^{M,N}(t,x)-u^M(t,x)}_{2p}\\
&\leq\tnorm{\int_0^t\int_0^1\left(G^M(t-\kappa_N^T(s),x,y)f(u^{M,N}(\kappa_N^T(s),\kappa_M(y)))\right.\\
&\quad\quad\left. -G^M(t-s,x,y)f(u^M(s,\kappa_M(y)))\right)\,\dd y\,\dd s}_{2p}\\
&\quad+\tnorm{\int_0^t\int_0^1\left(G^M(t-\kappa_N^T(s),x,y)\sigma(u^{M,N}(\kappa_N^T(s),\kappa_M(y)))\right.\\
&\quad\quad\left. -G^M(t-s,x,y)\sigma(u^M(s,\kappa_M(y)))\right)\, W(\dd s,\dd y)}_{2p}\\
&=: A+B.
\end{align*}

We show in detail the estimates for $B$. It will then be clear that
similar estimates can be made for $A$. First we note that
\begin{align*}
B^2 &\leq C(B_1^2+B_2^2),
\end{align*}
where
\begin{align*}
B_1^2&=\tnorm{\int_0^t\int_0^1 (G^M(t-\kappa_N^T(s),x,y)-G^M(t-s,x,y))\sigma(u^{M,N}(\kappa_N^T(s),\kappa_M(y)))\, W(\dd s,\dd y)}_{2p}^2
\end{align*}
and
\begin{align*}
B_2^2 &=\tnorm{\int_0^t\int_0^1 G^M(t-s,x,y)(\sigma(u^{M,N}(\kappa_N^T(s),\kappa_M(y)))-\sigma(u^M(s,\kappa_M(y))))\, W(\dd s,\dd y)}_{2p}^2.
\end{align*}

By Burkholder-Davies-Gundy and Minkowski's inequalities, we have
\begin{align*}
B_1^2 &= \left(\E\left[\left|\int_0^t\int_0^1 (G^M(t-\kappa_N^T(s),x,y)-G^M(t-s,x,y))\sigma(u^{M,N}(\kappa_N^T(s),\kappa_M(y)))\, W(\dd s,\dd y)\right|^{2p}\right]\right)^{1/p}\\
&\leq C\left(\E\left[\left(\int_0^t\int_0^1 |G^M(t-\kappa_N^T(s),x,y)-G^M(t-s,x,y)|^2|\sigma(u^{M,N}(\kappa_N^T(s),\kappa_M(y)))|^2\, \dd y\,\dd s\right)^p\right]\right)^{1/p}\\
&= C\tnorm{\int_0^t\int_0^1|G^M(t-\kappa_N^T(s),x,y)-G^M(t-s,x,y)|^2|\sigma(u^{M,N}(\kappa_N^T(s),\kappa_M(y)))|^2\, \dd y\,\dd s}_{p}\\
&\leq C\int_0^t\int_0^1 |G^M(t-\kappa_N^T(s),x,y)-G^M(t-s,x,y)|^2\tnorm{\sigma(u^{M,N}(\kappa_N^T(s),\kappa_M(y)))}_{2p}^2\,\dd y\,\dd s.
\end{align*}
By assumption \eqref{LG} and Proposition~\ref{fullappbdd}, we obtain
\begin{align*}
B_1^2&\leq\sup_{(s,y)\in [0,T]\times[0,1]}\tnorm{\sigma(u^{M,N}(s,y))}_{2p}^2\\
&\quad\quad\times\int_0^t\int_0^1 |G^M(t-\kappa_N^T(s),x,y)-G^M(t-s,x,y)|^2\,\dd y\,\dd s\\
&\leq C(\Delta t)^{1/2}.
\end{align*}
Here we have also used that
$$\sup_{x\in [0,1]} \int_0^t\int_0^1 |G^M(t-\kappa_N^T(s),x,y)-G^M(t-s,x,y)|^2\,\dd y\,\dd s\leq C(\Delta t)^{1/2},$$
where the constant $C$ does not depend on $M$. This is only a slight
variation of \eqref{greenreg1} in Lemma \ref{greenreg}. The proof is
very similar and is therefore omitted.

Concerning the term $B_2$, using analogous arguments we have
\begin{align*}
B_2^2&\leq C\int_0^t\int_0^1|G^M(t-s,x,y)|^2\,\dd y\\
&\quad\quad\times\sup_{y\in[0,1]}\tnorm{\sigma(u^{M,N}(\kappa_N^T(s),y))-\sigma(u^M(s,y))}_{2p}^2\,\dd s.
\end{align*}
By the Lipschitz assumption on $\sigma$ and $(ii)$ in
Lemma~\ref{greenreg}, we get
\begin{align}
B_2^2&\leq C\int_0^t\int_0^1 |G^M(t-s,x,y)|^2\,\dd y\sup_{x\in[0,1]}\tnorm{u^{M,N}(\kappa_N^T(s),x)-u^M(s,x)}_{2p}^2\,\dd s \nonumber \\
&\leq C\int_0^t\frac{1}{\sqrt{t-s}}\left(\sup_{x\in[0,1]}\tnorm{u^{M,N}(\kappa_N^T(s),x)-u^{M,N}(s,x)}_{2p}^2\right. \nonumber\\
&\quad\quad\left. +\sup_{x\in[0,1]}\tnorm{u^{M,N}(s,x)-u^M(s,x)}_{2p}^2\right)\,\dd s \nonumber \\
&\leq C\int_0^t\frac{1}{\sqrt{t-s}} \sup_{x\in[0,1]}\tnorm{u^{M,N}(\kappa_N^T(s),x)-u^{M,N}(s,x)}_{2p}^2 \, \dd s \nonumber \\
&\quad\quad + C \int_0^t\frac{1}{\sqrt{t-s}} \sup_{x\in[0,1]}\tnorm{u^{M,N}(s,x)-u^M(s,x)}_{2p}^2\,\dd s.
\label{eq:9}
\end{align}
At this point, We need to distinguish between the two different cases of the initial value $u_0$.

If we assume $u_0\in C([0,1])$, then we apply Proposition \ref{regMN} to the first term in \eqref{eq:9}, so we get
\begin{align*}
\int_0^t\frac{1}{\sqrt{t-s}} \sup_{x\in[0,1]}\tnorm{u^{M,N}(\kappa_N^T(s),x)-u^{M,N}(s,x)}_{2p}^2 \, \dd s
& \leq C (\Delta t)^\tau \int_0^t (t-s)^{-\frac12} s^{-\alpha}\, \dd s \\
& = C (\Delta t)^\tau \, B\Big(1-\alpha,\frac12\Big)\, t^{\frac12-\alpha},
\end{align*}
where $B$ denotes the Beta function. In order to obtain the last equality, we need to restrict the range on
$\alpha$ to $(\frac12,1)$ (part 1 in Proposition~\ref{regMN} was valid for any $\alpha \in (\frac12,\frac52)$).
In this case, notice that we have $\tau=\frac12 \wedge (\alpha-\frac12)=\alpha-\frac12$. Plugging the above estimate in \eqref{eq:9} and taking into account
that  we obtained the bound $B_1^2\leq C (\Delta t)^\frac12$, we have thus proved that
\[
 B^2 \leq C(t) (\Delta t)^{\alpha-\frac12} +  C \int_0^t\frac{1}{\sqrt{t-s}} \sup_{x\in[0,1]}\tnorm{u^{M,N}(s,x)-u^M(s,x)}_{2p}^2\,\dd s.
\]
As commented at the beginning of the proof, the analysis of the term
$A^2$ can be performed in a similar way, in such a way that the same
type of estimate can be obtained. Summing up, we have that
\[
 z(t) \leq C(t) (\Delta t)^{\alpha-\frac12} +  C \int_0^t\frac{1}{\sqrt{t-s}} z(s)\,\dd s,
\]
where $z(s):=\sup_{x\in [0,1]}\tnorm{u^{M,N}(s,x)-u^M(s,x)}_{2p}^2$. Then, applying a version 
of Gronwall's Lemma (see for instance \cite[Chap.~1]{pachpatte06}) we conclude this part of the proof.

\smallskip

If we instead assume $u_0\in H^\beta([0,1])$ for some $\beta>\frac{1}{2}$,
then we apply part 2 of Proposition~\ref{regMN} to the first term in \eqref{eq:9}, obtaining
\[
 \int_0^t\frac{1}{\sqrt{t-s}} \sup_{x\in[0,1]}\tnorm{u^{M,N}(\kappa_N^T(s),x)-u^{M,N}(s,x)}_{2p}^2 \, \dd s
 \leq C (\Delta t)^\tau,
\]
where $\tau=\frac12 \wedge (\beta-\frac12)$. Hence, in this case we get that
\[
 z(t) \leq C (\Delta t)^\tau +  C \int_0^t\frac{1}{\sqrt{t-s}} z(s)\,\dd s,
\]
and we conclude applying again a version of Gronwall's Lemma, see for instance \cite[Lem.~3.4]{gyongy1}.
%
\end{proof}

\medskip

Combining Theorems~\ref{th:space} and~\ref{th:time}, we arrive at
the following error estimate for the full discretization.
\begin{theorem}\label{fullLip}
Let $f$ and $\sigma$ satisfy conditions \eqref{L} and \eqref{LG}.
\begin{enumerate}
\item Assume that $u_0\in C([0,1])$ with $u_0(0)=u_0(1)=0$.
Then, for every $p\geq 1$, $t\in (0,T]$, $0<\alpha_1<\frac{1}{4}$
and $0<\alpha_2<\frac14$, there are constants $C_i=C_i(t)$, $i=1,2$,
such that
\begin{align*}
\sup_{x\in[0,1]}\left(
\E[|u^{M,N}(t,x)-u(t,x)|^{2p}]\right)^{\frac{1}{2p}} \leq C_1
(\Delta x)^{\alpha_1}+C_2 (\Delta t)^{\alpha_2}.
\end{align*}
\item Assume that $u_0\in H^\beta([0,1])$
with $u_0(0)=u_0(1)=0$, for some $\beta>\frac{1}{2}$. Then, for
every $p\geq 1$, $t\in (0,T]$, $0<\alpha_1<\frac{1}{4}$, there are
constants $C_1=C_1(t)$ and $C_2$ such that
\begin{align*}
\sup_{x\in[0,1]}\left(\E[|u^{M,N}(t,x)-u(t,x)|^{2p}]\right)^{\frac{1}{2p}}\leq
C_1(\Delta x)^{\alpha_1}+C_2(\Delta t)^\tau,
\end{align*}
where $\tau=\frac14\wedge(\frac{\beta}{2}-\frac14)$.
\end{enumerate}
\end{theorem}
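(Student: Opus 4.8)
The plan is to derive the stated bound simply by combining the two error estimates that have already been established, using the triangle inequality. For the first part, fix $t\in(0,T]$ and $x\in[0,1]$, and write
\[
u^{M,N}(t,x)-u(t,x)=\bigl(u^{M,N}(t,x)-u^M(t,x)\bigr)+\bigl(u^M(t,x)-u(t,x)\bigr).
\]
Applying Minkowski's inequality in $L^{2p}(\Omega)$ gives
\[
\Bigl(\E[|u^{M,N}(t,x)-u(t,x)|^{2p}]\Bigr)^{\frac1{2p}}
\leq \Bigl(\E[|u^{M,N}(t,x)-u^M(t,x)|^{2p}]\Bigr)^{\frac1{2p}}
+\Bigl(\E[|u^M(t,x)-u(t,x)|^{2p}]\Bigr)^{\frac1{2p}}.
\]
Now I would bound the first term on the right-hand side using part~1 of Theorem~\ref{th:time}: for any $0<\alpha_2<\frac14$ there is a constant $C_2=C_2(p,\alpha_2,t)$ with $\sup_{x\in[0,1]}\bigl(\E[|u^{M,N}(t,x)-u^M(t,x)|^{2p}]\bigr)^{1/(2p)}\leq C_2(\Delta t)^{\alpha_2}$. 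The second term is controlled by Theorem~\ref{th:space} (estimate \eqref{xconv}): for any $0<\alpha_1<\frac14$ there is a constant $C_1=C_1(p,\alpha_1,t)$ with $\sup_{x\in[0,1]}\bigl(\E[|u^M(t,x)-u(t,x)|^{2p}]\bigr)^{1/(2p)}\leq C_1(\Delta x)^{\alpha_1}$. Taking the supremum over $x\in[0,1]$ and adding yields exactly the claimed bound in part~1.

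For the second part, the argument is identical, except that I replace the use of part~1 of Theorem~\ref{th:time} by part~2: if $u_0\in H^\beta([0,1])$ with $\beta>\frac12$ and $u_0(0)=u_0(1)=0$, then $\sup_{t\in[0,T]}\sup_{x\in[0,1]}\bigl(\E[|u^{M,N}(t,x)-u^M(t,x)|^{2p}]\bigr)^{1/(2p)}\leq C_2(\Delta t)^\nu$ with $\nu=\frac14\wedge(\frac\beta2-\frac14)$, while the spatial error is still handled by \eqref{xconv} with any $\alpha_1<\frac14$ (note that $H^\beta\subset C([0,1])$ for $\beta>\frac12$, so Theorem~\ref{th:space} applies; one could alternatively invoke its smoother-initial-data version, but $\alpha_1<\frac14$ already suffices for the stated conclusion). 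Adding the two contributions gives the bound in part~2 with $\tau=\frac14\wedge(\frac\beta2-\frac14)$, as stated.

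There is no real obstacle here: the theorem is a pure corollary of the two preceding convergence theorems via the triangle inequality, and the only thing to be careful about is the bookkeeping of where each estimate is uniform (the spatial estimate \eqref{xconv} is for fixed $t$, hence the $t$-dependence of $C_1$ in both parts, whereas in part~2 the temporal estimate from Theorem~\ref{th:time} is uniform in $t\in[0,T]$, so $C_2$ there carries no $t$-dependence). I would also remark, as the authors do in the introduction, that combining the resulting rate $\frac14-$ in $\Delta t$ with the Sobolev embedding $H^\beta\hookrightarrow C^\delta$ for $\delta\in(0,\beta-\frac12)$ makes the hypothesis $\beta>\frac12$ in part~2 the natural one for attaining the rate exactly $\frac14$.
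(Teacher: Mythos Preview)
Your proposal is correct and matches the paper's approach exactly: the paper itself offers no separate proof, stating only that the result follows by ``combining Theorems~\ref{th:space} and~\ref{th:time}'', which is precisely the triangle-inequality argument you spell out. Your bookkeeping of the $t$-dependence of the constants is also accurate.
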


\medskip

\begin{remark}
For ease of presentation, we stated the above results for functions $f$ and $\sigma$ depending only on $u$.
Observe that the above results remain true in the case of functions $f$ and $\sigma$ depending on $(t,x,u)$ if one replaces
the condition \eqref{L} by the following one
\begin{align}\label{H}
|f(t,x,u)-f(s,y,v)|+|\sigma(t,x,u)-\sigma(s,y,v)|\leq C\bigl(|t-s|^{1/4}+|x-y|^{1/2}+|u-v|\bigr)\tag{H}
\end{align}
for all $s,t\in[0,T]$, $x,y\in[0,1]$, $u,v\in\R$. In this case, the fully discrete solution reads
\begin{equation*}
\begin{aligned}
u^{M,N}(t,x)&=\int_0^1G^M(t,x,y)u_0(\kappa_M(y))\,\dd y\\
&\quad+\int_0^t\int_0^1 G^M(t-\kappa_N^T(s),x,y)f(\kappa_N^T(s),\kappa_M(y),u^{M,N}(\kappa_N^T(s),\kappa_M(y)))\,\dd y\,\dd s\\
&\quad+\int_0^t\int_0^1 G^M(t-\kappa_N^T(s),x,y)\sigma(\kappa_N^T(s),\kappa_M(y),u^{M,N}(\kappa_N^T(s),\kappa_M(y)))\,W(\dd s,\dd y),
\end{aligned}
\end{equation*}
where we recall that $\kappa_M=\frac{[My]}{M}$ and $\kappa_N^T(s)=T\kappa_N(\frac{s}{T})$.
\end{remark}

\subsubsection{Numerical experiments: strong convergence}\label{sect-numexpstrong}

We now numerically illustrate the results from
Theorem~\ref{th:time}. To do so, we first discretize the problem
\eqref{heateq}, with $u_0(x)=\cos(\pi(x-1/2))$, $f(u)=u/2$,
$\sigma(u)=1-u$ with centered finite differences using the mesh
$\Delta x=2^{-9}$. The time discretizations are done using the
semi-implicit Euler-Maruyama scheme (see e.g. \cite{gyongy2}), the
Crank-Nicolson-Maruyama scheme (see e.g. \cite{Walsh}) and the
explicit exponential integrator \eqref{sexp} with step sizes $\Delta
t$ ranging from $2^{-1}$ to $2^{-16}$. The loglog plots of the
errors
$\sup_{(t,x)\in[0,0.5]\times[0,1]}\E[|u^{M,N}(t,x)-u^M(t,x)|^2]$ are
shown is Figure~\ref{fig:strong}, where convergence of order $1/2$
for the exponential integrator is observed. The reference solution
is computed with the exponential integrator using $\Delta
x_{\text{ref}}=2^{-9}$ and $\Delta t_{\text{ref}}=2^{-16}$. The
expected values are approximated by computing averages over
$M_s=500$ samples.

\begin{figure}
\begin{center}
\includegraphics*[height=7cm,keepaspectratio]{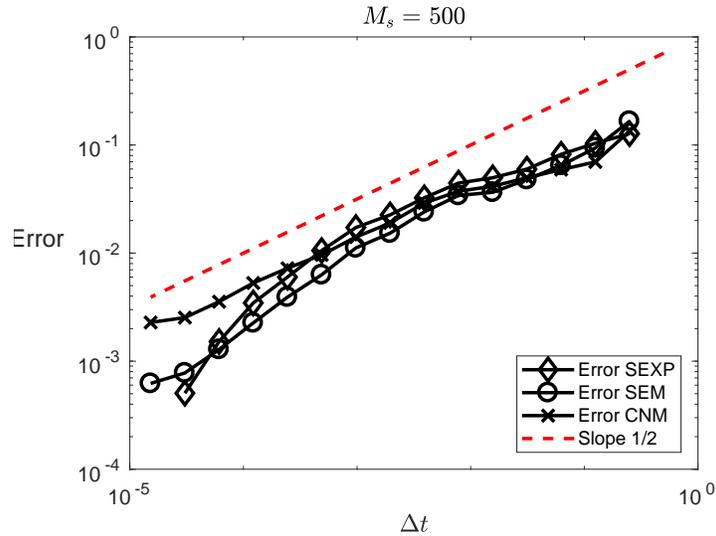}
\caption{Temporal rates of convergence  for the exponential integrator (SEXP), the semi-implicit Euler-Maruyama scheme (SEM),
and the Crank-Nicolson-Maruyama scheme (CNM).
The reference line has slope $1/2$ (dashed line).}
\label{fig:strong}
\end{center}
\end{figure}

Next, we compare the computational costs of the explicit stochastic
exponential method \eqref{sexp}, the semi-implicit Euler-Maruyama
scheme, and the Crank-Nicolson-Maruyama scheme for the numerical
integration of problem \eqref{heateq} with the same parameters as in
the previous numerical experiments. We run the numerical methods
over the time interval $[0,1]$. We discretize the spatial domain
$[0,1]$ with a mesh $\Delta x=2^{-6}$. We run $100$ samples for each
numerical method. For each method and each sample, we run several
time steps and compare the error at final time with a reference
solution provided for the same sample with the same method for the
very small time step $\Delta t_{\text{ref}}=2^{-15}$.
Figure~\ref{fig:compcost} shows the total computational time for all
the samples, for each method and each time step, as a function of
the averaged final error we obtain.

\begin{figure}
\begin{center}
\includegraphics*[height=6cm,keepaspectratio]{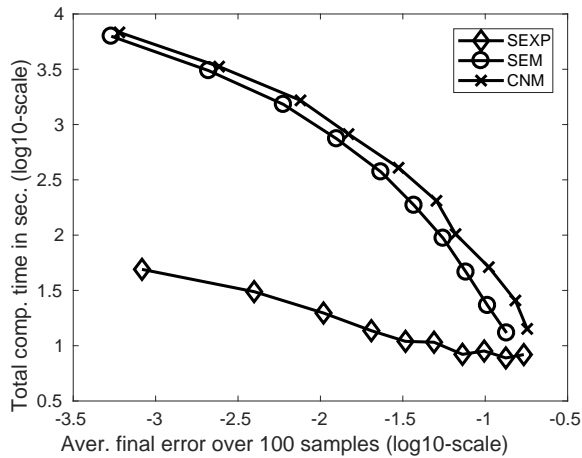}
\caption{Computational time as a function of the averaged final error
for the following numerical methods: the stochastic exponential scheme \eqref{sexp} (SEXP),
the semi-implicit Euler-Maruyama (SEM),
and the Crank-Nicholson-Maruyama scheme (CNM).}
\label{fig:compcost}
\end{center}
\end{figure}

We observe that the computational cost of the Crank-Nicolson-Maruyama scheme is slightly higher than
the cost of the semi-implicit Euler-Maruyama scheme which is a little bit higher
than the one for the explicit scheme \eqref{sexp}.

\subsection{Full discretization: almost sure convergence}

In this subsection we prove almost sure convergence of the fully
discrete approximation $u^{M,N}$ \eqref{fullapp} to the exact solution $u$ of the stochastic heat equation \eqref{heateq}
with globally Lipschitz continuous coefficients. The main result is the following.

\smallskip

\begin{theorem}\label{th:as}
Assume that the functions $f$ and $\sigma$ satisfy the conditions \eqref{LG} and \eqref{L},
and that $u_0\in C([0,1])$ with $u_0(0)=u_0(1)=0$.
Then, the full approximation $u^{M,N}(t,x)$ converges to $u(t,x)$
almost surely, as $M,N\rightarrow \infty$, uniformly in $t\in[0,T]$ and $x\in[0,1]$.
\end{theorem}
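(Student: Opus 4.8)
The plan is to reduce the statement to the temporal discretization error and then run a Borel--Cantelli argument based on a supremum moment bound. Since Theorem~\ref{th:space} already provides that $u^M\to u$ almost surely, uniformly in $(t,x)\in[0,T]\times[0,1]$, it suffices to prove that $\sup_{(t,x)\in[0,T]\times[0,1]}|u^{M,N}(t,x)-u^M(t,x)|\to 0$ almost surely as $M,N\to\infty$. A useful preliminary observation is that the initial-data terms in the mild forms \eqref{fullapp} and \eqref{spaceapp} coincide, so that $u^{M,N}(t,x)-u^M(t,x)=w^{M,N}(t,x)-w^M(t,x)$; in particular this difference vanishes at $t=0$, and by the triangle inequality in $L^{2p}(\Omega)$ together with Proposition~\ref{holder} it is Hölder continuous in $(t,x)$ in $L^{2p}(\Omega)$, with exponent $\tfrac14$ in time and $\tfrac12$ in space, uniformly in $M$ and $N$. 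This is what lets us avoid the degeneracy as $t\downarrow 0$ that affects Theorem~\ref{th:time}(1).

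The core step is to upgrade the pointwise-in-$t$ estimate of Theorem~\ref{th:time} to a bound on the supremum: for every $p\ge1$ there exist $\theta>0$ and a constant $C_p$, independent of $M$ and $N$, such that
\[
\E\Bigl[\sup_{(t,x)\in[\eta,T]\times[0,1]}|u^{M,N}(t,x)-u^M(t,x)|^{2p}\Bigr]\le C_{p,\eta}\,(\Delta t)^{\theta p}
\]
for each fixed $\eta\in(0,T)$, together with $\E\bigl[\sup_{(t,x)\in[0,\eta]\times[0,1]}|u^{M,N}(t,x)-u^M(t,x)|^{2p}\bigr]\le C_p\,\eta^{p/2}$. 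I would prove these by a chaining argument: cover $[0,T]\times[0,1]$ by cells of side $\delta$; on the grid points the $L^{2p}(\Omega)$-norm of $u^{M,N}-u^M$ is controlled by Theorem~\ref{th:time}(1) for $t\ge\eta$ and by the Hölder bound of Proposition~\ref{holder} (since the difference is $0$ at $t=0$) for $t\le\eta$, and there are of order $\delta^{-2}$ of them; the oscillation inside each cell is bounded, via a Garsia--Rodemich--Rumsey inequality (or a dyadic chaining) applied to the uniform Hölder estimate for $w^{M,N}-w^M$, by $C\delta^{\rho}$ for some $\rho>0$; optimizing $\delta$ as an appropriate power of $\Delta t$ yields the displayed bounds. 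The uniform-in-$M$ regularity results (Lemma~\ref{greenreg}, Propositions~\ref{fullappbdd} and~\ref{regMN}) guarantee that all constants here are independent of $M$ and $N$.

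With these estimates in hand, Chebyshev's inequality gives, for every $\epsilon>0$, $\eta\in(0,T)$ and $p\ge1$,
\[
\mathbb{P}\Bigl(\sup_{(t,x)\in[\eta,T]\times[0,1]}|u^{M,N}(t,x)-u^M(t,x)|>\epsilon\Bigr)\le \epsilon^{-2p}\,C_{p,\eta}\,(T/N)^{\theta p},
\]
which, for $p$ large enough that $\theta p>1$, is summable in $N$ uniformly in $M$; the Borel--Cantelli lemma then yields $\sup_{(t,x)\in[\eta,T]\times[0,1]}|u^{M,N}(t,x)-u^M(t,x)|\to0$ almost surely as $N\to\infty$, uniformly in $M$, and a similar Chebyshev/Borel--Cantelli step shows that $\limsup_{M,N}\sup_{(t,x)\in[0,\eta]\times[0,1]}|u^{M,N}-u^M|\le C\eta^{1/4}$ almost surely. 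Letting $\eta\downarrow0$ along a countable sequence gives $\sup_{(t,x)\in[0,T]\times[0,1]}|u^{M,N}(t,x)-u^M(t,x)|\to0$ almost surely as $M,N\to\infty$, and combining with Theorem~\ref{th:space} finishes the proof. I expect the main obstacle to be the uniform supremum moment estimate: making the chaining rigorous near $t=0$ (which is where the cancellation of the $u_0$-terms and Proposition~\ref{holder} are essential) and, above all, tracking that every constant — in particular in the Hölder estimate for $w^{M,N}-w^M$ — is genuinely independent of both $M$ and $N$, since it is precisely this $M$-uniformity that allows the passage from convergence in probability to almost sure convergence jointly in $M$ and $N$.
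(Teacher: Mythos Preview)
Your overall architecture is exactly the paper's: cancel the $u_0$-terms to reduce to $w^{M,N}-w^M$, control grid-point values by Theorem~\ref{th:time} and oscillations in small cells by Proposition~\ref{holder}, then sum and apply Markov plus Borel--Cantelli. The paper does this in one shot on $[0,T]\times[0,1]$, writing $|w^{M,N}-w^M|^{2p}\le C(A_1+A_2+A_3)$ with $A_1=\sum_{n,i}|w^{M,N}(t_n,x_i)-w^M(t_n,x_i)|^{2p}$ over an $(N{+}1)\times(N{+}1)$ grid and $A_2,A_3$ the cell oscillations of $w^{M,N}$ and $w^M$; this yields $\E[\sup_{M}\sup_{(t,x)}|w^{M,N}-w^M|^{2p}]\le C N^{-(2p\min(\delta,\mu)-2)}$ directly, without any split into $[0,\eta]$ and $[\eta,T]$.

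Your split is motivated by the (legitimate) worry that the constant in Theorem~\ref{th:time}(1) depends on $t$, but your treatment of the strip $[0,\eta]$ has a genuine gap. From $\E[\sup_{[0,\eta]\times[0,1]}|u^{M,N}-u^M|^{2p}]\le C_p\eta^{p/2}$ you cannot conclude $\limsup_{M,N}\sup_{[0,\eta]\times[0,1]}|u^{M,N}-u^M|\le C\eta^{1/4}$ almost surely ``by a similar Chebyshev/Borel--Cantelli step'': the bound does not decay in $N$, so the probabilities are not summable, and a uniform moment bound on a countable family gives no control on its a.s.\ limsup (take any i.i.d.\ sequence with bounded moments but unbounded support). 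Two easy repairs: (i) make $\eta$ depend on $N$, e.g.\ $\eta_N=(\Delta t)^\gamma$ for small $\gamma>0$, so the $[0,\eta_N]$ moment bound becomes $C(\Delta t)^{\gamma p/2}$ and \emph{is} summable, while the blow-up $C_{p,\eta_N}\sim\eta_N^{-c}$ on $[\eta_N,T]$ only costs a small power of $\Delta t$; or (ii) drop the split and argue as the paper does, noting that the $t_0=0$ grid term vanishes and that the contribution of the remaining time layers to $\E[A_1]$ is still $O(N^{2}(\Delta t)^{2\mu p})$.
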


\begin{proof}
In \cite[Thm.~3.1]{gyongy1}, it was shown that $u^M(t,x)$
converges to $u(t,x)$ almost surely uniformly in $(t,x)$ as $M\rightarrow\infty$.
It is therefore enough to show that $u^{M,N}(t,x)$ converges to $u^M(t,x)$ almost surely,
as $N\rightarrow\infty$,
uniformly in $(t,x)$ and $M\in\mathbb{N}$. To achieve this, it suffices to prove that $w^{M,N}(t,x)$
converges to
$w^M(t,x)$ almost surely in $(t,x)$ as $N\rightarrow\infty$.
This is because the terms involving $u_0$ in the approximations
$u^M$ given by \eqref{spaceapp} and $u^{M,N}$ given by \eqref{fullapp} are the same.
We first observe that
$$
|w^{M,N}(t,x)-w^M(t,x)|^{2p}\leq C(A_1+A_2+A_3),
$$
where
\begin{align*}
A_1&=\sum_{n=0}^N\sum_{i=0}^N\left|w^{M,N}(t_n,x_i)-w^M(t_n,x_i)\right|^{2p}\\
A_2&=\sup_{n=0,\ldots,N}\sup_{i=0,\ldots,N}\sup_{|x-x_i|\leq 1/N}\sup_{|t-t_n|\leq \Delta t}\left|w^{M,N}(t,x)-w^{M,N}(t_n,x_i)\right|^{2p}\\
A_3&=\sup_{n=0,\ldots,N}\sup_{i=0,\ldots,N}\sup_{|x-x_i|\leq 1/N}\sup_{|t-t_n|\leq \Delta t}\left|w^{M}(t,x)-w^{M}(t_n,x_i)\right|^{2p}
\end{align*}
and we recall that $x_i$ and $t_n$ are the discrete points in space and time, respectively,
given by $x_i=\frac{i}{N}$ for $i=0,1\ldots,N$ and $t_n=n\Delta t$ for $n=0,1,\ldots,N$.
By Theorem~\ref{th:time} we obtain
$$
\E[A_1]\leq C\left(\frac{1}{N}\right)^{2\mu p-2},
$$
for all $0<\mu<\frac14$.
Also, by Proposition~\ref{holder} we have
$$
\E[A_2+A_3]\leq C\left(\frac{1}{N}\right)^{2p\delta}
$$
for $\delta\in(0,1/4)$. Using that
$$
\left(\frac{1}{N}\right)^{2\mu p-2}+\left(\frac{1}{N}\right)^{2p\delta}\leq 2\left(\frac{1}{N}\right)^{2 p\min(\delta,\mu)-2}
$$
we thus get
$$
\E\left[\sup_{M\geq 1}\sup_{(t,x)\in[0,T]\times [0,1]}|w^{M,N}(t,x)-w^M(t,x)|^{2p}\right]\leq C\left(\frac{1}{N}\right)^{2p\min(\delta,\mu)-2},
$$
where the constant $C$ does not depend on $M$ neither on $N$.
Hence, using Markov's inequality we obtain that
$$
\mathbb{P}\left(\sup_{M\geq 1}\sup_{(t,x)\in[0,T]\times[0,1]}|w^{M,N}(t,x)-w^M(t,x)|^{2p}>\left(\frac{1}{N}\right)^2\right)\leq
C\left(\frac{1}{N}\right)^{2 p\min(\delta,\mu)-4}
$$
for all integers $N\geq 1$. It thus follows that
$$
\sum_{N=1}^\infty \mathbb{P}\left(\sup_{M\geq 1}\sup_{(t,x)\in[0,T]\times[0,1]}|w^{M,N}(t,x)-w^M(t,x)|^{2p}>\left(\frac{1}{N}\right)^2\right)<\infty
$$
for $p$ large enough. By the Borel-Cantelli lemma we now know that for sufficiently large $p$ we have
$$
\sup_{M\geq 1}\sup_{(t,x)\in[0,T]\times[0,1]}|w^{M,N}(t,x)-w^M(t,x)|^{2p}\leq \frac{1}{N^2},
$$
with probability one. Taking the limit $N\to\infty$ concludes the proof.
\end{proof}

\subsubsection{Numerical experiments: almost sure convergence}\label{sect-numexpAS}

We now numerically illustrate Theorem~\ref{th:as}. To do so, we
first discretize the stochastic heat equation \eqref{heateq}, with
$u_0(x)=\cos(\pi(x-1/2))$, $f(u)=1-u$, $\sigma(u)=\sin(u)$ with
centered finite differences using the mesh $\Delta x=2^{-9}$. The
time discretization is done using the explicit exponential
integrator \eqref{sexp} with step sizes $\Delta t$ ranging from
$2^{-6}$ to $2^{-18}$ (only every second power). Figure~\ref{fig:as}
displays, for a fixed spatial discretization, profiles of one
realization of the numerical solution at the fixed time $T=0.5$ as
well as a reference solution computed with the exponential
integrator using $\Delta x_{\text{ref}}=2^{-9}$ and $\Delta
t_{\text{ref}}=2^{-18}$. Convergence to this reference solution as
the time step goes to zero (from light to dark grey plots) is
observed.

\begin{figure}
\begin{center}
\includegraphics*[height=7cm,keepaspectratio]{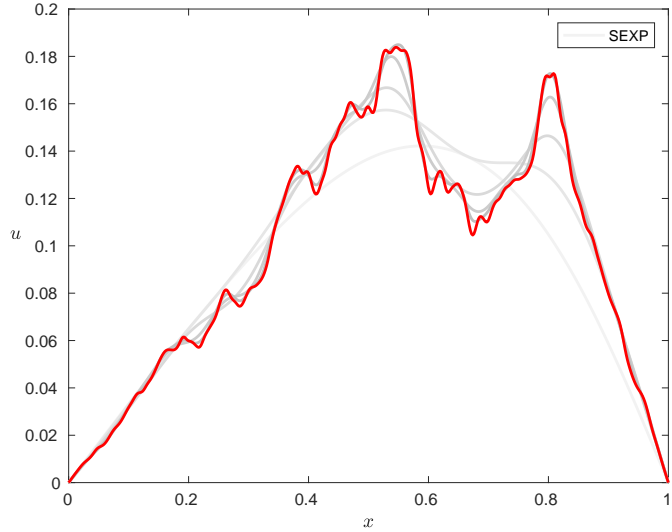}
\caption{Almost sure convergence of the exponential integrator (SEXP). The reference solution is displayed in red.}
\label{fig:as}
\end{center}
\end{figure}

\section{Convergence analysis for non-globally Lipschitz continuous coefficients}\label{section:nonLipschitz}

In this section, we remove the globally Lipschitz assumption on the
coefficients $f$ and $\sigma$ in equation \eqref{heateq} and we
prove convergence in probability of the fully discrete approximation
$u^{M,N}$ given by \eqref{fullapp} to the exact solution $u$ of
\eqref{heateq}. Throughout the section we will assume that the
initial condition $u_0$ belongs to $H^\beta$ for some
$\beta>\frac12$.

\smallskip

Furthermore, we shall consider the following hypotheses:
\begin{itemize}
\item[(PU)] Pathwise uniqueness holds for problem \eqref{heateq}: whenever $u$ and $v$ are carried by
the same filtered probability space and if both $u$ and $v$ are
solutions to problem \eqref{heateq} on the stochastic time interval
$[0,\tau)$, then $u(t,\cdot)=v(t,\cdot)$ for all $t\in[0,\tau)$,
almost surely.
\item[(C)] The coefficient functions $f(t,x,u)$ and $\sigma(t,x,u)$ are continuous in the variable $u$. 
\end{itemize}

\smallskip

\begin{remark}
For general conditions ensuring pathwise uniqueness in equation
\eqref{heateq}, we refer the reader to \cite{GP1,GP2}. Nevertheless,
note that pathwise uniqueness for parabolic stochastic partial
differential equations is an active research topic. Indeed, we
mention, for instance, the works \cite{MR1608641} (Lipschitz
coefficients), \cite{MR2773025,MR3357612} (H\"older coefficients),
\cite{MR3127884,MR3422943} (additive noise), where this question is
investigated. These results provide examples of parabolic stochastic
partial differential equations where assumption (PU) is fulfilled.
\end{remark}

\smallskip

In order to prove the main result of the section (cf Theorem~\ref{th:proba}), 
we will follow a similar approach as in
\cite{gyongy1} (see also \cite{ps05}). More precisely, we will first
use the results from Section~\ref{section:Lipschitz} to deduce that
the family of laws determined by $u^{M,N}$ are tight in the space of
continuous functions. Then, we will apply Skorokhod's representation
theorem and make use of the weak form \eqref{eq:6} corresponding to
the fully discrete approximation $u^{M,N}$. Finally, a suitable
passage to the limit and assumption (PU) will let us
conclude the proof.

\smallskip

We will use the above strategy in a successful way thanks the
following two auxiliary results.

\begin{lemma}[Lemma $4.5$ in \cite{gyongy1}]\label{limit}
For all $k\geq 0$, let $z^k=\{z^k(t,x)\colon t\geq 0, x\in[0,1]\}$
be a continuous $\mathcal{F}_t^k$-adapted random field and let
$W^k=\{W^k(t,x)\colon t\geq 0, x\in[0,1]\}$ be a Brownian sheet
carried by some filtered probability space $(\Omega, \mathcal{F},
(\mathcal{F}_t^k)_{t\geq 0},P)$. Assume also that, for every
$\epsilon>0$
$$
\lim_{k\rightarrow\infty}P\left(\sup_{t\in[0,T]}\sup_{x\in[0,1]}(|z^k-z^0|+|W^k-W^0|)(t,x)\geq \epsilon\right)=0.
$$
Let $h=h(t,x,r)$ be a bounded Borel function of
$(t,x,r)\in\mathbb{R}_+\times[0,1]\times\mathbb{R}$, which is
continuous in $r\in\mathbb{R}$. Then, letting $k\rightarrow \infty$,
\begin{align*}
&\int_0^t\int_0^1 h(s,x,z^k(s,x))\,\dd x\,\dd s\longrightarrow \int_0^t\int_0^1 h(s,x,z^0(s,x))\,\dd x\,\dd s,\\
&\int_0^t\int_0^1 h(s,x,z^k(s,x))\, W^k(\dd s,\dd x)\longrightarrow \int_0^t\int_0^1 h(s,x,z^0(s,x))\, W^0(\dd s,\dd x),
\end{align*}
in probability for every $t\in[0,T]$.
\end{lemma}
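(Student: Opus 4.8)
The plan is to prove the convergence of the ordinary Lebesgue-type integral first, since it is the simpler case, and then to handle the stochastic integral by a truncation/$L^2$-isometry argument combined with uniform integrability, reducing it to the deterministic case. For the first statement, since $h$ is bounded and continuous in $r$, and $z^k(s,x)\to z^0(s,x)$ in probability uniformly in $(s,x)$, one passes to a subsequence along which the convergence is almost sure uniformly in $(s,x)$; then for each fixed $\omega$ off a null set, $h(s,x,z^k(s,x))\to h(s,x,z^0(s,x))$ for all $(s,x)$ (using continuity in $r$), and the bounded convergence theorem on the finite measure space $[0,t]\times[0,1]$ gives convergence of the integrals. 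Since every subsequence has a further subsequence converging to the same limit, the full sequence converges in probability; this is the standard subsequence criterion.

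For the stochastic integral, the idea is to write
\begin{align*}
&\int_0^t\!\!\int_0^1 h(s,x,z^k(s,x))\,W^k(\dd s,\dd x) - \int_0^t\!\!\int_0^1 h(s,x,z^0(s,x))\,W^0(\dd s,\dd x) \\
&\quad= \int_0^t\!\!\int_0^1 \bigl(h(s,x,z^k(s,x)) - h(s,x,z^0(s,x))\bigr)\,W^k(\dd s,\dd x) \\
&\quad\quad + \int_0^t\!\!\int_0^1 h(s,x,z^0(s,x))\,\bigl(W^k(\dd s,\dd x) - W^0(\dd s,\dd x)\bigr).
\end{align*}
For the first term, the Walsh isometry bounds its second moment by $\E\int_0^t\int_0^1 |h(s,x,z^k(s,x))-h(s,x,z^0(s,x))|^2\,\dd x\,\dd s$, which tends to $0$ by the bounded-convergence argument already used (now the integrand is bounded by $(2\|h\|_\infty)^2$ and converges to $0$ in probability pointwise, so dominated convergence applies after taking expectations). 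The second term is more delicate because it involves the difference of two Brownian sheets; here one uses that $h(s,x,z^0(s,x))$ is a fixed integrand while $W^k$ and $W^0$ are close. A clean way is to note that $\int_0^t\int_0^1 h\,W^k(\dd s,\dd x)$, for the \emph{same} integrand $h(s,x,z^0(s,x))$, converges in probability as $k\to\infty$: one can approximate the integrand in $L^2(\dd P\times \dd s\times \dd x)$ by simple (step) processes, for which the stochastic integral reduces to finite sums of increments of $W^k$, and those increments converge in probability to the corresponding increments of $W^0$ by hypothesis; a standard $3\varepsilon$-estimate, using the isometry to control the approximation error uniformly in $k$, finishes it.

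The main obstacle I expect is precisely this last point: justifying that $\int h(s,x,z^0(s,x))\,W^k(\dd s,\dd x)\to \int h(s,x,z^0(s,x))\,W^0(\dd s,\dd x)$, because the stochastic integrals with respect to $W^k$ and $W^0$ are a priori defined on different filtrations, and one must check that $z^0$ (and the approximating step processes) are adapted to \emph{all} the filtrations $\mathcal{F}_t^k$ simultaneously — or else set up the argument so that only increments of the sheets over fixed rectangles appear, which do converge in probability by assumption. The cleanest route is to reduce everything to such increments via the simple-process approximation and then invoke the isometry bound (valid for each $k$ with the same constant) to control the tail, so that the convergence of finitely many rectangle-increments suffices. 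Once these pieces are in place, combining the three estimates and again using the subsequence criterion yields convergence in probability for every $t\in[0,T]$.
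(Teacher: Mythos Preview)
The paper does not prove this lemma; it is quoted verbatim from Gy\"ongy's article and simply cited. So there is no ``paper's own proof'' to compare against, and your write-up would in effect supply a proof where the authors chose to defer to the literature.

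On substance: your treatment of the Lebesgue integral is correct and standard. For the stochastic integral, however, the decomposition you write down is problematic precisely at the point you flag. The cross-term
\[
\int_0^t\!\!\int_0^1 h(s,x,z^0(s,x))\,W^k(\dd s,\dd x)
\]
is not a priori a Walsh integral, because $z^0$ is $\mathcal{F}^0_t$-adapted, not $\mathcal{F}^k_t$-adapted. You propose to rescue this via simple-process approximation plus the isometry, but the isometry bound you want to invoke ``uniformly in $k$'' is exactly what fails: the $L^2$-isometry for $W^k$ holds only for $\mathcal{F}^k_t$-adapted integrands, and your simple approximants of $h(\cdot,\cdot,z^0)$ inherit adaptedness from $z^0$, not from $z^k$. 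So the $3\varepsilon$-argument as sketched has a genuine gap.

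A cleaner route, closer to how such lemmas are usually proved (and presumably to Gy\"ongy's argument), avoids the cross-term altogether. After passing to an a.s.\ uniformly convergent subsequence, approximate $h(s,x,z^k(s,x))$ by elementary processes built from the values of $z^k$ on a fixed dyadic grid --- these are automatically $\mathcal{F}^k_t$-adapted, so the isometry applies legitimately and gives an approximation error that is small uniformly in $k$ (using continuity of $h$ in $r$ together with the equicontinuity of the family $\{z^k\}$ implied by uniform convergence to the continuous limit $z^0$). For fixed grid, the resulting finite sums of products $h(s_i,x_j,z^k(s_i,x_j))\cdot W^k(R_{ij})$ converge in probability to the corresponding sums with $z^0$ and $W^0$, since each factor does. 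This three-level argument (grid approximation / convergence at fixed grid / uniform control of the remainder) closes without ever writing an integral of a $\mathcal{F}^0$-adapted process against $W^k$.
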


\smallskip

\begin{lemma}[Lemma $4.4$ in \cite{gyongy1}]\label{polish}
Let $E$ be a Polish space equipped with the Borel $\sigma$-algebra.
A sequence of $E$-valued random elements $(z_n)_{n\geq 1}$ converges
in probability if and only if, for every pair of subsequences
$z_l:=z_{n_l}$ and $z_m:=z_{n_m}$, there exists a subsequence
$v_k:=(z_{l_k}, z_{m_k})$ converging weakly to a random element $v$
supported on the diagonal $\{(x,y)\in E\times E\colon x=y\}$.
\end{lemma}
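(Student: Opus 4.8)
The plan is to prove the two implications of the equivalence separately. The implication ``$\Rightarrow$'' is elementary; the implication ``$\Leftarrow$'' will be obtained by contradiction, reducing the matter to the fact that on a complete metric space convergence in probability coincides with being Cauchy in probability, combined with an application of the portmanteau theorem.

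For ``$\Rightarrow$'', assume $z_n\to z$ in probability for some $E$-valued random element $z$. Every subsequence of $(z_n)$ again converges to $z$ in probability, so, equipping $E\times E$ with the metric $d((x,y),(x',y'))=d_E(x,x')\vee d_E(y,y')$ and using a union bound, for any pair of subsequences the associated sequence of pairs $v_k=(z_{l_k},z_{m_k})$ converges in probability to $(z,z)$, hence weakly to $v:=(z,z)$, whose law is supported on the diagonal. In this case no further extraction is even needed.

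For ``$\Leftarrow$'' I argue by contradiction: suppose the stated condition holds but $(z_n)$ does \emph{not} converge in probability. Since $E$ is Polish, hence complete, a sequence which is Cauchy in probability converges in probability (choose a subsequence along which $P(d_E(z_{n_k},z_{n_{k+1}})>2^{-k})<2^{-k}$, apply Borel--Cantelli and completeness to obtain an almost sure limit, and then transfer the convergence to the whole sequence). Consequently $(z_n)$ fails to be Cauchy in probability, so there are $\epsilon,\delta>0$ and strictly increasing index sequences $(n_k)$, $(m_k)$ with
\[
 P\bigl(d_E(z_{n_k},z_{m_k})\ge\epsilon\bigr)\ge\delta\qquad\text{for all }k.
\]
Applying the hypothesis to the pair of subsequences $(z_{n_k})_k$ and $(z_{m_k})_k$ gives a common-index subsequence $(k_j)_j$ along which the pairs $v_j:=(z_{n_{k_j}},z_{m_{k_j}})$ converge weakly in $E\times E$ to some $v=(v^1,v^2)$ supported on the diagonal, i.e.\ $P(v^1=v^2)=1$.

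It then remains to reach the contradiction. Since $(x,y)\mapsto d_E(x,y)$ is continuous, the set $F:=\{(x,y)\in E\times E:\ d_E(x,y)\ge\epsilon\}$ is closed, so the portmanteau theorem yields $\limsup_{j\to\infty}P(v_j\in F)\le P(v\in F)=P(d_E(v^1,v^2)\ge\epsilon)=0$, because $v^1=v^2$ almost surely. On the other hand $P(v_j\in F)=P(d_E(z_{n_{k_j}},z_{m_{k_j}})\ge\epsilon)\ge\delta>0$ for every $j$ --- a contradiction. Hence $(z_n)$ must converge in probability. The only genuinely delicate point is the step ``not convergent in probability $\Rightarrow$ not Cauchy in probability'', which is exactly where completeness of the Polish space $E$ enters; observe also that the weakly convergent subsequence of pairs is furnished directly by the hypothesis, so no tightness or Prokhorov argument is required.
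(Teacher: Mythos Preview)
The paper does not supply its own proof of this lemma; it is merely quoted from \cite{gyongy1} (where it is Lemma~4.4, originating in work of Gy\"ongy and Krylov). There is therefore nothing in the present article to compare your argument against.

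That said, your proof is correct and is essentially the standard one. The forward implication is immediate, and for the converse you correctly identify the crucial point: completeness of the Polish space $E$ ensures that convergence in probability is equivalent to being Cauchy in probability, so failure to converge yields two subsequences that stay $\epsilon$-apart with probability at least $\delta$. The portmanteau theorem applied to the closed set $\{(x,y):d_E(x,y)\ge\epsilon\}$ then contradicts weak convergence of the pair to a diagonal-supported limit. All steps are sound.
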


\smallskip

We are now ready to state and prove the main result of this section.

\smallskip

\begin{theorem}\label{th:proba}
Assume that the coefficients $f$ and $\sigma$ satisfy condition
\eqref{LG}, and that hypotheses (PU) and (C) are fulfilled. Then, there exists a random field
$u=\{u(t,x)\colon t\geq 0, x\in[0,1]\}$ such that, for every
$\epsilon>0$,
$$
\mathbb{P}\left(\sup_{t\in[0,T]}\sup_{x\in[0,1]}|u^{M_k,N_k}(t,x)-u(t,x)|\geq\epsilon\right)\rightarrow
0,
$$
as $k$ tends to infinity, for all sequences of positive integers
$(M_k,N_k)_{k\geq 1}$ such that $M_k, N_k\rightarrow\infty$, as
$k\rightarrow \infty$, where we recall that $u^{M,N}$ denotes the
fully discrete solution \eqref{fullapp}. Furthermore, the random
field $u$ is the unique solution to the stochastic heat equation
\eqref{heateq}.
\end{theorem}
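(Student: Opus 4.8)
The plan is to follow the classical tightness/Skorokhod/pathwise-uniqueness scheme, exactly as outlined just before the statement and as carried out in \cite{gyongy1,ps05}. First I would establish tightness of the family of laws $\{\mathcal{L}(u^{M,N},W)\}$ on the Polish space $E=C([0,T]\times[0,1];\mathbb{R})\times C([0,T]\times[0,1];\mathbb{R})$. The second component is constant in law (it is always the Brownian sheet $W$), so the issue is tightness of $\{\mathcal{L}(u^{M,N})\}$. Since $u_0\in H^\beta$ with $\beta>\frac12$, part 2 of Proposition~\ref{regMN} gives the uniform moment bound $\E[|u^{M,N}(t,x)-u^{M,N}(s,z)|^{2p}]\leq C(|t-s|^{\tau p}+|x-z|^{2\tau p})$ with $\tau=\frac12\wedge(\beta-\frac12)$, together with Proposition~\ref{fullappbdd} for the bound at a single point; by the Kolmogorov--Chentsov criterion (choosing $p$ large) this yields tightness in $C([0,T]\times[0,1])$. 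Note that here we only use the linear growth assumption \eqref{LG}, which is why these a priori bounds survive the removal of the Lipschitz hypothesis \eqref{L}.

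Next I would invoke Lemma~\ref{polish}: to prove convergence in probability of $u^{M_k,N_k}$ it suffices to show that, given any two subsequences, a further subsequence of the pairs converges weakly to a limit supported on the diagonal. So fix two subsequences indexed by $(M_{l},N_{l})$ and $(M_{m},N_{m})$ and consider the joint laws of $(u^{M_{l},N_{l}},u^{M_{m},N_{m}},W)$; by the tightness above, along a further subsequence these converge weakly on $C([0,T]\times[0,1])^2\times C([0,T]\times[0,1])$. By Skorokhod's representation theorem there is a new probability space carrying random fields $(\bar u^{(1)}_k,\bar u^{(2)}_k,\bar W_k)$ with the same joint laws, converging almost surely (uniformly in $(t,x)$) to $(\bar u^{(1)},\bar u^{(2)},\bar W)$, where $\bar W$ is again a Brownian sheet. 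The weak form \eqref{eq:6} for $u^{M,N}$ is preserved under equality in law, so each $\bar u^{(i)}_k$ satisfies \eqref{eq:6} (equivalently the mild form \eqref{fullapp}) with $M=M_k^{(i)}$, $N=N_k^{(i)}$ and noise $\bar W_k$. The core step is then to pass to the limit $k\to\infty$ in each of these equations: the deterministic-integral and stochastic-integral terms involving $f$ and $\sigma$ are handled by Lemma~\ref{limit} (using hypothesis (C), i.e. continuity of $f,\sigma$ in $u$, together with a localization argument to reduce to bounded $h$, since $f,\sigma$ need not be bounded — one truncates and uses the uniform moment bounds to control the error of truncation uniformly in $k$); the term with the discrete Laplacian $\Delta_M$ acting on the test function converges to the true Laplacian term because $M_k^{(i)}\to\infty$ and $\Phi\in C^\infty$; the kernels $G^M$ converge to $G$ and $\kappa_M(y)\to y$, $\kappa_N^T(s)\to s$. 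The upshot is that both $\bar u^{(1)}$ and $\bar u^{(2)}$ are solutions of the stochastic heat equation \eqref{heateq} (in the sense of \eqref{eq:13}) driven by the \emph{same} noise $\bar W$ on the same filtered space. Hypothesis (PU) then forces $\bar u^{(1)}=\bar u^{(2)}$, so the weak limit of the pairs is supported on the diagonal; Lemma~\ref{polish} gives convergence in probability of $u^{M_k,N_k}$ to some limit random field $u$, and the same passage-to-the-limit argument (applied to a single sequence) shows $u$ solves \eqref{heateq}, with uniqueness again from (PU). Finally, since this holds for every sequence $(M_k,N_k)\to\infty$ and the limit $u$ is characterized as the unique solution of \eqref{heateq}, the stated convergence follows.

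The main obstacle I expect is the limit passage in the nonlinear stochastic integral term when $f$ and $\sigma$ are only continuous with linear growth (not bounded): Lemma~\ref{limit} is stated for bounded $h$, so one must carefully set up a truncation $\sigma_R=\sigma\cdot\chi_R$, apply Lemma~\ref{limit} to $\sigma_R$, and then control the discrepancy between the $\sigma$- and $\sigma_R$-integrals uniformly in $k$ using the uniform-in-$(M,N)$ $L^{2p}$ bounds from Proposition~\ref{fullappbdd} (and the analogous bound for the Skorokhod copies, which holds since it is a law statement) via Chebyshev, letting $R\to\infty$ after $k\to\infty$. A secondary technical point is bookkeeping with the two different mesh parameters $M_k^{(i)},N_k^{(i)}$ simultaneously and checking that the discrete objects ($G^{M}$, $\Delta_M$, $\kappa_M$, $\kappa_N^T$) all converge to their continuous counterparts in the topology needed to pair against the smooth test function and to identify the limit equation; this is routine but must be done for each of the three/four terms in \eqref{eq:6}.
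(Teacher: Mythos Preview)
Your proposal is correct and follows essentially the same route as the paper: tightness via part~2 of Proposition~\ref{regMN}, then the subsequence criterion of Lemma~\ref{polish} combined with Skorokhod's representation theorem, passage to the limit in the weak form \eqref{eq:6} using Lemma~\ref{limit}, and finally pathwise uniqueness (PU) to force the limit onto the diagonal. You have in fact been more careful than the paper on one point: the paper invokes Lemma~\ref{limit} directly without commenting on the boundedness hypothesis on $h$, whereas you correctly flag that $f$ and $\sigma$ are only of linear growth and propose the standard truncation-plus-uniform-$L^{2p}$ remedy, which is exactly what is needed to make that step rigorous.
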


\begin{proof}
We first show that the sequence $(u^{M,N})_{M,N\geq1}$ defines a
tight family of laws in the space $C([0,T]\times[0,1])$. To do so,
we invoke part 2 in Proposition \ref{regMN} on the regularity of the
numerical solution and we apply the tightness criterion on the plane
\cite[Thm.~2.2]{MR2678391}, which generalizes a well-known result of
Billingsley. Furthermore, Prokhorov's theorem implies that the
sequence of laws $(u^{M,N})_{M,N\geq1}$ is relatively compact in
$C([0,T]\times[0,1])$.

\smallskip
Fix any pair of sequences $(M_k,N_k)_{k\geq 1}$ such that $M_k,
N_k\rightarrow\infty$, as $k\rightarrow \infty$. Then, the laws of
$v_k:=u^{M_k,N_k}$, $k\geq 1$, form a tight family in the space
$C([0,T]\times[0,1])$.

\smallskip

Let now $(v^1_j)_{j\geq 1}$ and $(v^2_\ell)_{\ell\geq 1}$ be two
subsequences of $(v_k)_{k\geq 1}$. By Skorokhod's Representation
Theorem, there exist subsequences of positive integers $(j_r)_{r\geq
1}$ and $(\ell_r)_{r\geq 1}$ of the indices $j$ and $\ell$, a
probability space
$(\widehat\Omega,\widehat{\mathcal{F}},(\widehat{\mathcal{F}_t})_{t\geq1},\widehat{\mathbb{P}})$,
and a sequence of continuous random fields $(z_r)_{r\geq 1}$ with
$z_r:=\bigl(\widetilde u_r,\overline u_r,\widehat W_r\bigr)$, $r\geq1$, such that

\smallskip

\begin{enumerate}
\item $z_r\underset{r\to\infty}{\longrightarrow} z:=(\widetilde u,\overline u,\widehat W)$ a.s.
in $C([0,T]\times[0,1],\mathbb{R}^3)$, where the random field $z$ is
defined on $(\widehat\Omega,\widehat{\mathcal{F}},
(\widehat{\mathcal{F}_t})_{t\geq1},\widehat{\mathbb{P}})$, $\widehat
W$ is a Brownian sheet defined on this basis, and
$\widehat{\mathcal{F}_t}=\sigma(z(s,x), \, (s,x)\in [0,t]\times
[0,1])$ (and conveniently completed).
\item For every $r\geq 1$, the finite dimensional distributions of $z_r$ coincide with those
of the random field $\zeta_r:=\bigl(v^1_{j_r},v^2_{\ell_r},W\bigr)$,
and thus $\text{law}(z_r)=\text{law}(\zeta_r)$ for all $r\geq 1$.
\end{enumerate}

\smallskip

Note that $\widehat W_r$ is a Brownian sheet defined on
$(\widehat\Omega,\widehat{\mathcal{F}},
(\widehat{\mathcal{F}^r_t})_{t\geq1},\widehat{\mathbb{P}})$, where
$\widehat{\mathcal{F}^r_t}=\sigma(z_r(s,x), \\ (s,x)\in [0,t]\times
[0,1])$ (and conveniently completed).

\smallskip

We now fix $(t,x)\in[0,T]\times[0,1]$. Since the laws of $z_r$ and
$\zeta_r$ coincide and the first two components of $\zeta_r$ satisfy
the weak form \eqref{eq:6}, so do the components of $z_r$.
Namely,
 for all $\Phi\in C^\infty(\mathbb{R}^2)$ with
$\Phi(t,0)=\Phi(t,1)=0$ for all $t$, it holds
\begin{align}
 \int_0^1 \widetilde{u}_r(t,\kappa_M(y))\Phi(t,y)\, \dd y = & \int_0^1 u_0(\kappa_M(y))\Phi(t,y)\, \dd y \nonumber \\
 & \quad + \int_0^t\int_0^1 \widetilde{u}_r(s,\kappa_M(y)) \left(\Delta_M \Phi(s,y) +
 \frac{\partial \Phi}{\partial s}(s,y)\right)\, \dd y\, \dd s \nonumber \\
 & \quad + \int_0^t\int_0^1 f(\widetilde{u}_r(\kappa_N^T(s),\kappa_M(y))) \Phi(s,y)\, \dd y\, \dd s \nonumber \\
 & \quad + \int_0^t\int_0^1 \sigma(\widetilde{u}_r(\kappa_N^T(s),\kappa_M(y))) \Phi(s,y)\, W(\dd s,\dd y),
 \quad \widehat{\mathbb{P}}\text{-a.s.},
 \label{eq:11}
\end{align}
for all $t\in [0,T]$, and also
\begin{align}
 \int_0^1 \overline u_r(t,\kappa_M(y))\Phi(t,y)\, \dd y = & \int_0^1 u_0(\kappa_M(y))\Phi(t,y)\, \dd y \nonumber \\
 & \quad + \int_0^t\int_0^1 \overline u_r(s,\kappa_M(y)) \left(\Delta_M \Phi(s,y) +
 \frac{\partial \Phi}{\partial s}(s,y)\right)\, \dd y\, \dd s \nonumber \\
 & \quad + \int_0^t\int_0^1 f(\overline u_r(\kappa_N^T(s),\kappa_M(y))) \Phi(s,y)\, \dd y\, \dd s \nonumber \\
 & \quad + \int_0^t\int_0^1 \sigma(\overline u_r(\kappa_N^T(s),\kappa_M(y))) \Phi(s,y)\, W(\dd s,\dd y),
 \quad \widehat{\mathbb{P}}\text{-a.s.},
 \label{eq:12}
\end{align}
for all $t\in [0,T]$. We recall that $\Delta_M$ denotes the discrete
Laplacian, which is defined by
\[
 \Delta_M\Phi(s,y):= (\Delta x)^{-2} \left\{\Phi(s,y+\Delta x) - 2\Phi(s,y) + \Phi(s,y-\Delta
 x)\right\},
\]
where we remind that $\Delta x=\frac1M$.

Taking $r\rightarrow\infty$ in the above formulas \eqref{eq:11} and
\eqref{eq:12}, and using Lemma \ref{limit}, we show that the random
fields $\tilde u$ and $\bar u$ are solutions of \eqref{eq:13}, and
hence of equation \eqref{heateq}, on the same stochastic basis
$(\widehat\Omega,\widehat{\mathcal{F}},(\widehat{\mathcal{F}_t})_{t\geq1},\widehat{\mathbb{P}})$.
Thus, by the pathwise uniqueness assumption, we obtain that
$\widetilde u(t,x)=\overline u(t,x)$ for all
$(t,x)\in[0,T]\times[0,1]\:\:$ $\widehat{\mathbb{P}}$-a.s. Hence, by
Lemma \ref{polish}, we get that $\{u^{M_{k},N_{k}}\}_{k\geq1}$
converges in probability to $u$, uniformly on $[0,T]\times[0,1]$,
the solution of the stochastic heat equation \eqref{heateq}.
\end{proof}

\section{Acknowledgement}
L. Quer-Sardanyons' research is supported by grants 2014SGR422 and MTM2015-67802-P.
This work was partially supported by the Swedish Research Council (VR) (project nr. $2013-4562$). 
The computations were performed on resources provided 
by the Swedish National Infrastructure for Computing (SNIC) at HPC2N, Ume{\aa} University. 

\bibliographystyle{plain}
\bibliography{biblio}

\end{document}